\newtheorem{dummy}{dummy}[section]
\newtheorem{lemma}[dummy]{Lemma}
\newtheorem{theorem}[dummy]{Theorem}
\newtheorem{corollary}[dummy]{Corollary}
\newtheorem{proposition}[dummy]{Proposition}
\theoremstyle{definition}
\newtheorem{definition}[dummy]{Definition}
\newtheorem{remark}[dummy]{Remark}
\newcommand{\bC}{\mathbb{C}}
\newcommand{\bP}{\mathbb{P}}
\newcommand{\bR}{\mathbb{R}}
\newcommand{\bZ}{\mathbb{Z}}
\newcommand{\bT}{\mathbb{T}}
\newcommand{\cE}{\mathcal{E}}
\newcommand{\cL}{\mathcal{L}}
\newcommand{\cO}{\mathcal{O}}
\newcommand{\gk}{\kappa}
\newcommand{\Perf}{\mathcal{P}\mathrm{erf}}
\newcommand{\Tbi}{T_{\beta_{i}}}
\newcommand{\Tbj}{T_{\beta_{j}}}
\newcommand{\Tbk}{T_{\beta_{k}}}
\newcommand{\Tbkk}{T_{\beta_{k+1}}}
\newcommand{\Tbii}{T_{\beta_{i+1}}}
\newcommand{\Tbjj}{T_{\beta_{j+1}}}
\newcommand{\Ta}{T_{\alpha}}
\newcommand{\Txi}{T_{\kappa(x_i)}}
\newcommand{\Txj}{T_{\kappa(x_j)}}
\newcommand{\Txk}{T_{\kappa(x_k)}}
\newcommand{\To}{T_{\mathcal{O}}}
\newcommand{\Txii}{T_{\kappa(x_{i+1})}}
\newcommand{\Txjj}{T_{\kappa(x_{j+1})}}
\newcommand{\Txkk}{T_{\kappa(x_{k+1})}}
\newcommand{\Txu}{T_{\kappa(x_{1})}}
\begin{document}

\title[A note on mapping class group actions on derived categories.]{A note on mapping class group actions on derived categories.}

\begin{abstract}
Let $X_n$ be a cycle of $n$ projective lines, and $\bT_n$ a symplectic torus with $n$ punctures. Using the theory of spherical twists introduced by Seidel and Thomas \cite{ST}, I will define an action of the pure mapping class group of $\bT_n$ on $D^b(Coh(X_n))$. The motivation comes from homological mirror symmetry for degenerate elliptic curves, which was studied by the author with Treumann and Zaslow in \cite{STZ}.
\end{abstract}

\author{Nicol\`o Sibilla}
\address{Nicol\`o Sibilla, Max Planck Institute for Mathematics, Vivatsgasse 7,
53111 Bonn,
Germany}
\email{sibilla@mpim-bonn.mpg.de}

\keywords{derived category, spherical twists, mapping class group}
\subjclass[2000]{14F05, 53D37} 
\maketitle

{\small \tableofcontents}

\section{Introduction.}

According to Kontsevich's \emph{Homological Mirror Symmetry} conjecture (from now on HMS, see \cite{K}), given a Calabi-Yau variety $X$ and a symplectic manifold $\tilde{X}$, if $X$ and $\tilde{X}$ are mirror partners, then the derived category of coherent sheaves over $X$, $D^b(Coh(X))$, should be equivalent to the Fukaya category of $\tilde{X}$, $Fuk(\tilde{X})$. Since $Fuk(\tilde{X})$ is an invariant of the symplectic geometry of $\tilde{X}$, mirror symmetry predicts that the group of symplectic automorphisms of $\tilde{X}$ acts by equivalences on $D^b(Coh(X))$. In \cite{ST} Seidel and Thomas investigate this aspect of HMS by introducing the notions of \emph{spherical object} and \emph{twist functor}, which can be defined for general triangulated categories, and axiomatize the formal homological properties enjoyed by equivalences of the Fukaya category induced by \emph{generalized Dehn twists} (these are special symplectic automorphisms introduced by Seidel, see \cite{S}). Using their theory they are able, in many interesting examples, to give a conjectural description of the equivalences of $D^b(Coh(X))$ which should be mirror to symplectic automorphisms of $\tilde{X}$. I refer the reader to \cite{ST} for a detailed account of this circle of ideas. A brief overview of the relevant definitions will be given in Section \ref{action} below.

Let $X_n$ be a cycle of $n$ projective lines, i.e. a nodal curve of arithmetic genus $1$, with $n$ singular points. Well known mirror symmetry heuristics suggest that the mirror of $X_n$ should be a symplectic torus with $n$ punctures, which I shall denote $\bT_n$. In the paper \cite{STZ}, joint with Treumann and Zaslow, we prove a version of HMS for $X_n$ and $\bT_n$, by showing that the category of perfect complexes over $X_n$, $\Perf(X_n)$, is quasi-equivalent to a certain conjectural model of $Fuk(\bT_n)$ which we develop in the paper. See also the recent work \cite{LP} in which the authors prove, with very different techniques, a HMS statement for the case $n = 1$. 

Motivated by \cite{STZ}, in this paper I explore the consequences of mirror symmetry for the study of auto-equivalences of $D^b(Coh(X_n))$. Recall that the mapping class group of an oriented surface $\Sigma$ can be described as the group of symplectic automorphisms of $\Sigma$, modulo isotopy. The existence of an action of the mapping class group of $\bT_n$ on $D^b(Coh(X_n))$ does not follow directly from \cite{STZ}, as the model of the Fukaya category considered there is not acted upon, in any obvious way, by symplectomorphisms of $\bT_n$.\footnote{Note also that the HMS statement in \cite{STZ} involves $\Perf(X_n)$, rather than the full derived category of $X_n$. However, $D^b(Coh(X_n))$ has the same group of auto-equivalences of $\Perf(X_n)$. In fact, any equivalence of $D^b(Coh(X_n))$ gives, by restriction, an equivalence of $\Perf(X_n)$, and it follows from Lemma \ref{equiv} that this assignment is a bijection.} My main result uses the framework of \cite{ST} to construct an action of the (pure) mapping class group of $\bT_n$, $\mathrm{PM}(\bT_n)$, over $D^b(Coh(X_n))$. In future work, I plan to establish that this action is faithful. It is worth pointing out that the action I will define is, in an appropriate sense, a categorification of the symplectic representation of the mapping class group, which can be recovered by considering the induced action on the \emph{numerical} Grothendieck group of $\Perf(X_n)$ (see Remark \ref{symplectic} below. For a definition of the symplectic representation, the reader can refer to \cite{FM}, Chapter 6). 

The paper is organized as follows. In Section \ref{mapping}, I give some background on the mapping class group, and then work out a convenient presentation of $\mathrm{PM}(\bT_n)$. The proof of the main result, Theorem \ref{main}, is contained in Section \ref{action}. Theorem \ref{main} generalizes previous results in \cite{ST} and in \cite{BK}, where the authors considered, respectively, the case of a smooth elliptic curve, and of the nodal cubic in $\bP^2$ (i.e. the case $n = 1$). Equivalences of $D^b(Coh(X_n))$ were also investigated in \cite{L}. However, as the author in \cite{L} restricts to a subgroup of equivalences satisfying certain homological conditions, which are violated by the spherical twists I shall consider below, there is essentially no overlap between his work and the present project.

\emph{Acknowledgments:} I am grateful to David Treumann and Eric Zaslow for many valuable conversations, and for our collaboration \cite{STZ}, which is the starting point of this paper. I thank Luis Paris for giving me very useful explanations concerning his paper with Catherine Labru\`ere \cite{LP}. I would like to thank Bernd Siebert and Hamburg University, and Yuri Manin and the Max Planck Institute for Mathematics, for their hospitality during a period in which part of this work was carried out.  

\section{The mapping class group of a punctured torus.}
\label{mapping}
In this section I will briefly review some basic facts about the mapping class group, and then give a presentation of the mapping class group of the punctured torus based on \cite{LP}. Also, it will be useful to spell out some relations between mapping classes which were found by Gervais in \cite{G}. For a comprehensive introduction to the mapping class group I refer the reader to \cite{FM}.

Let $\Sigma = \Sigma_{g, n, b}$ be a differentiable, oriented surface of genus $g$, with $n$ marked points, and $b$ boundary components. The mapping class group of $\Sigma$, denoted by $\mathrm{M}(\Sigma)$, is the group of isotopy classes of orientation preserving diffeomorphisms of $\Sigma$, which send the set of marked points to itself, and restrict to the identity on the boundary components. Note that $\mathrm{M}(\Sigma_{g, n, b})$ is uniquely determined by the parameters $g, n, b$.  The \emph{pure} mapping class group of $\Sigma$ is the subgroup $\mathrm{PM}(\Sigma) \hookrightarrow \mathrm{M}(\Sigma)$ of mapping classes fixing pointwise the set of marked points. Alternatively, $\mathrm{PM}(\Sigma)$ can be defined as the subgroup of $\mathrm{M}(\Sigma)$ generated by Dehn twists along simple closed curves (for the definiton of Dehn twist, and a proof of this claim, see Chapter 3 and 4 of \cite{FM}). In making the above definitions, marked points on $\Sigma$ could be interpreted, equivalently, as punctures, and I shall make use freely of both viewpoints in the following.

A surface $\Sigma$ with $n$ punctures and $b + m$ boundary components can be immersed in a surface with $n + m$ punctures and $b$ boundary components (we can trade $m$ boundary components for $m$ punctures, by gluing a punctured discs along each boundary component we wish to remove). Further, this immersion induces a map of pure mapping class groups. The details can be found in Section 2 of \cite{LP}, together with the following lemma which will be useful later.

\begin{lemma}
\label{punct-bd}
Let $(g, r, m) \notin \{(0,0,1),(0,0,2)\}$, then we have the exact sequence
$$
1 \rightarrow \bZ^{m} \rightarrow \mathrm{PM}(\Sigma_{g, n, b+m}) \rightarrow \mathrm{PM}(\Sigma_{g, n+m, b}) \rightarrow 1,
$$
where $\bZ^m$ stands for the free abelian group of rank $m$ generated by the Dehn twists along the $m$ boundary components we are removing.
\end{lemma}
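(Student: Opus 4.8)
The plan is to exhibit the right-hand arrow as an iterated \emph{capping homomorphism} and to reduce the whole statement to the case $m = 1$. Concretely, the map $\mathrm{PM}(\Sigma_{g, n, b+m}) \to \mathrm{PM}(\Sigma_{g, n+m, b})$ is the one induced by gluing a once-punctured disk onto each of the $m$ distinguished boundary components; write $\delta_1, \dots, \delta_m$ for simple closed curves parallel to these components. Capping the boundary components off one at a time factors this map through the intermediate surfaces $\Sigma_{g, n+k, b+m-k}$, each of which avoids the excluded degenerate locus because $(g, n, m)$ does. It therefore suffices to establish the short exact sequence
$$
1 \to \bZ \to \mathrm{PM}(\Sigma_{g, n, b+1}) \to \mathrm{PM}(\Sigma_{g, n+1, b}) \to 1
$$
for a single capping, and then to splice the $m$ resulting sequences together. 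The splicing is not automatic: one must check that the successive kernels $\langle T_{\delta_i} \rangle$ assemble into a \emph{free abelian} group of rank $m$. This follows because the curves $\delta_i$ are pairwise disjoint, so their Dehn twists commute, whence the iterated extension is abelian, generated by $m$ commuting elements of infinite order, and hence equal to $\bZ^m = \langle T_{\delta_1}, \dots, T_{\delta_m} \rangle$.

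For the case $m = 1$, let $\Phi$ denote the capping homomorphism and $\delta$ the curve being capped. Surjectivity is the easy direction: by the characterization recalled above, $\mathrm{PM}(\Sigma_{g, n+1, b})$ is generated by Dehn twists along simple closed curves, and the change-of-coordinates principle lets us isotope any such curve off the capping disk, realizing it as the image under $\Phi$ of a Dehn twist on $\Sigma_{g, n, b+1}$. That $T_\delta \in \ker \Phi$ is equally direct: after capping, $\delta$ bounds a once-punctured disk, and a Dehn twist about a curve bounding a once-punctured disk is isotopic to the identity, since the twisting can be unwound across the puncture.

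The substantive content is the reverse inclusion $\ker \Phi \subseteq \langle T_\delta \rangle$, together with the fact that $\langle T_\delta \rangle \cong \bZ$. For the group structure, the hypothesis $(g, n, 1) \notin \{(0,0,1),(0,0,2)\}$ guarantees that the underlying surface is neither a disk nor an annulus, so $T_\delta$ has infinite order; in the excluded cases the capped surface is too simple and the boundary twist collapses. For the inclusion I would argue by the Alexander method: a mapping class $\phi$ fixing the boundary pointwise with $\Phi(\phi)$ trivial must fix, up to isotopy, every isotopy class of essential simple closed curve and properly embedded arc upstairs. Pulling the capping isotopies back shows that $\phi$ may be taken to agree with the identity outside a collar neighborhood of $\delta$, so that $\phi$ is supported in that collar and is therefore a power of $T_\delta$. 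The main obstacle is precisely this last step, which is the crux of the capping exact sequence: two diffeomorphisms agreeing after capping can differ by an arbitrary amount of boundary framing in the collar, and one must verify that this framing is recorded \emph{exactly} by the integer powers of $T_\delta$. The careful verification, using the Birman exact sequence and the point-pushing map to compare the framed and marked-point pictures, is carried out in Section 2 of \cite{LP}.
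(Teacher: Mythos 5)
Your proposal is correct and takes essentially the same route as the paper: the paper offers no argument of its own for this lemma, deferring entirely to Section~2 of \cite{LP}, and your sketch---reduce to a single capping, splice the $m$ resulting extensions using that the boundary twists commute and have infinite order, check surjectivity and $T_\delta \in \ker\Phi$ directly---is the standard outline, with the one genuinely hard step (that $\ker\Phi$ is generated by $T_\delta$, i.e.\ that the residual boundary framing is recorded exactly by powers of the boundary twist) honestly flagged and outsourced to the same reference. Since that crux is precisely what the paper also leaves to \cite{LP}, there is no gap relative to the paper's own treatment.
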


Set $\bT_n = \Sigma_{1, n, 0}$ and $\bT_{n,m} = \Sigma_{1, n, m}$. The pure mapping class group $\mathrm{PM}(\bT_{n})$ is generated by Dehn twists along $n+1$ non-separating simple closed curves. In order to fix ideas, it is convenient to choose explicit representatives for this collection of curves. I will mostly follow the notation of \cite{G}, to which I refer for further details. Let $\Lambda = \bZ^2 \hookrightarrow \bR^2$ be the standard integral lattice, let $\bT = \bR^2 / \bZ^2$, and fix a fundamental domain for the action of $\Lambda$, say $[0,1) \times [0,1)$. Choose as set of marked points $P = \{ p_1 = (\frac{1}{n+1}, \frac{1}{2}), \dots, p_{n} = (\frac{n}{n+1}, \frac{1}{2}) \}$, and identify the index set $\{1 \dots n\}$ with $\bZ/n$ endowed with the natural cyclic order.\footnote{In order to make sense of the successor operator $\bullet + 1$ on the index set, I will also use the additive structure of $\bZ/n$.} A cyclic order allows us to speak unambiguously about ordered triples. If $i, j, k \in \{1 \dots n \}$ (not necessarily distinct) form an ordered triple, I shall write $i \preccurlyeq j \preccurlyeq k$. If I require $i,j,k$ to be distinct, I will use the symbol $\prec$.

Let $\alpha$ and $\beta_{i}$, $i \in \{1 \dots n \}$ be the following simple closed curves: in the fundamental domain, $\alpha$ is given by $[0,1) \times \{\frac{1}{3}\}$, and $\beta_i$ is given by $\{\frac{i}{n+1} -  \frac{1}{2(n+1)}\} \times [0,1)$. It will be important to consider also separating curves $\gamma_{i,j}$ indexed by an ordered pair $i, j \in \{1 \dots n \}$. The loop $\gamma_{i,j}$ can be described as the boundary of a tubular neighborhood of a straight segment $\sigma$ in $\bT$, starting at $p_i$ and ending at $p_j$, and such that $p_k \in \sigma$ if and only if $i \preccurlyeq k \preccurlyeq j$. A schematic representation of these curves is given in Figure \ref{fig 1}.

\begin{figure}
\includegraphics[height=2in]{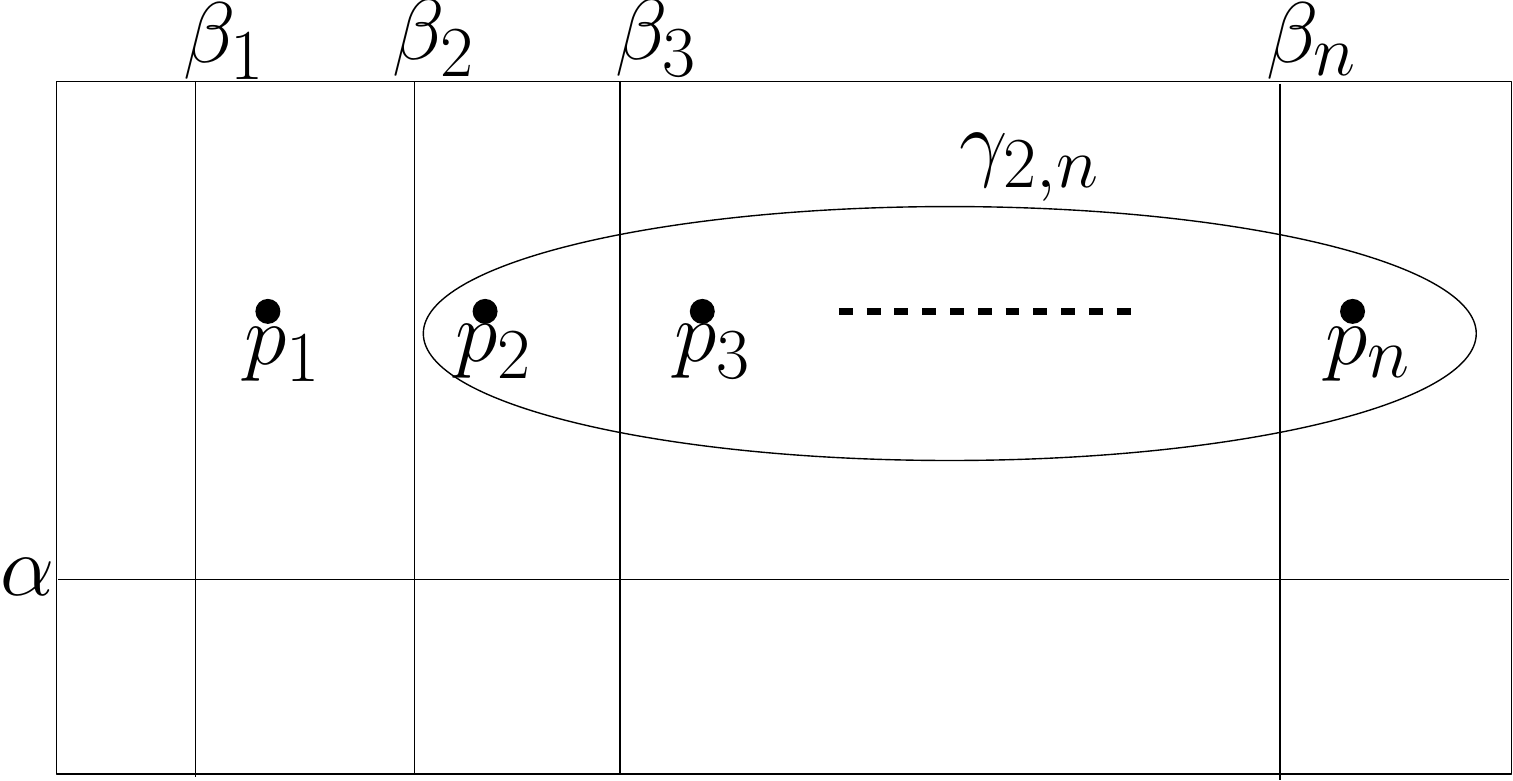}
\caption{The picture above represents the simple closed curves introduced earlier, which are visualized as subsets of the fixed fundamental domain for the action of $\Lambda$.}
\label{fig 1}
\end{figure}

If $\mu$ is a simple closed curve in a differentiable surface $\Sigma$, denote $T_\mu$ the Dehn twist along it. I will consider $\bT_{n-1,1}$ to be the closed subsurface of $\bT_n$ obtained by cutting out a small open disc centered in $p_n$ (small means that its boundary should not intersect any of the loops described above). It follows from \cite{LP} that both $\mathrm{PM}(\bT_{n})$ and $\mathrm{PM}(\bT_{n-1, 1})$ are generated by Dehn twists $T_\alpha$, and $T_{\beta_{i}}$, $i \in \{1 \dots n\}$. I will refer to this collection of Dehn twists as \emph{Humphrey generators}, in analogy with Humphrey's set of generators for the mapping class group of a compact surface. 

A presentation of $\mathrm{PM}(\bT_{n-1, 1})$ in terms of Humphrey generators can be read off Proposition 3.3 of \cite{LP}. For the reader's convenience I collect it below.
\begin{proposition}
\label{bd}
The pure mapping class group $\mathrm{PM}(\bT_{n-1, 1})$ is generated by $T_{\alpha}$, and $T_{\beta_{i}}$, $i \in \{1 \dots n \}$, subject to the following relations:
\begin{itemize}
\item(Braid relations) for every $i, j \in \{1 \dots n \}$, 

$T_{\beta_{i}}T_{\beta_{j}} = T_{\beta_{j}}T_{\beta_{i}}$,

$T_{\alpha}T_{\beta_i}T_{\alpha} = T_{\beta_i}T_{\alpha}T_{\beta_i}.$

\item(Commutativity relations)
for every $i, j, k \in \{1 \dots n \}$, $i \prec j \prec k$,
$$
\Tbi(\Ta^{-1}\Tbkk^{-1}\Tbj^{-1}\Ta^{-1}\Tbk\Ta\Tbj\Tbkk\Ta) = (\Ta^{-1}\Tbkk^{-1}\Tbj^{-1}\Ta^{-1}\Tbk\Ta\Tbj\Tbkk\Ta)\Tbi.
$$ 
\end{itemize}
\end{proposition}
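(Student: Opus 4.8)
The plan is to obtain the presentation by transcribing Proposition 3.3 of \cite{LP}, which already supplies a presentation of $\mathrm{PM}(\bT_{n-1,1}) = \mathrm{PM}(\Sigma_{1,n-1,1})$ in terms of Dehn twists along a distinguished family of simple closed curves. First I would verify that the Humphrey generators $\Ta, \Tbi$ ($i \in \{1,\dots,n\}$) agree with the generators used in \cite{LP}. By the change-of-coordinates principle (see \cite{FM}), any two families of curves on $\Sigma_{1,n-1,1}$ realizing the same pattern of complementary regions and the same geometric intersection numbers are carried into one another by an orientation-preserving diffeomorphism fixing the marked points and the boundary; such a diffeomorphism conjugates one generating set onto the other, so it suffices to check that our curves $\alpha, \beta_1, \dots, \beta_n$ carry the intersection data recorded in \cite{LP}. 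The two braid relations are then immediate from this data: the $\beta_i$ are pairwise disjoint (they are parallel vertical loops at distinct $x$-coordinates), so their twists commute, while $\alpha$ and each $\beta_i$ meet in a single point, whence $\Ta$ and $\Tbi$ satisfy the braid relation $\Ta\Tbi\Ta = \Tbi\Ta\Tbi$ (both commutation-from-disjointness and braid-from-one-intersection being standard, cf.\ \cite{FM}).

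The substantive point is the commutativity relation. The key observation is that the long parenthesized word is a conjugate: writing $w = \Ta^{-1}\Tbkk^{-1}\Tbj^{-1}\Ta^{-1}$, one has
$$
\Ta^{-1}\Tbkk^{-1}\Tbj^{-1}\Ta^{-1}\Tbk\Ta\Tbj\Tbkk\Ta = w\,\Tbk\,w^{-1}.
$$
By the naturality of Dehn twists, $\phi T_\mu \phi^{-1} = T_{\phi(\mu)}$, this equals the Dehn twist along the image curve $w(\beta_k)$. Consequently the asserted relation is exactly the statement that $\Tbi$ commutes with $T_{w(\beta_k)}$, and by the same principle this holds if and only if $\beta_i$ and $w(\beta_k)$ can be isotoped to be disjoint, i.e.\ have geometric intersection number zero. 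So the relation reduces to a purely geometric disjointness claim, to be established precisely when $i \prec j \prec k$.

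The hard part will be the geometric bookkeeping in this last step: one must track the image of $\beta_k$ under the four successive twists comprising $w$ and identify the resulting isotopy class concretely. I expect $w(\beta_k)$ to be isotopic to one of the separating loops $\gamma_{\bullet,\bullet}$ encircling the block of consecutive marked points with indices running between $j$ and $k$, so that the disjointness $i(\beta_i, w(\beta_k)) = 0$ for $i \prec j \prec k$ follows by observing that $\beta_i$ lies in the complementary region containing none of those punctures; drawing the successive images and confirming the isotopy class is the main obstacle. An alternative route, should the relations in \cite{LP} be packaged differently from the ones stated here, is to show directly that the two sets of relators generate the same normal subgroup of the free group on $\Ta, \Tbi$; this is routine once the curves have been matched, but still requires the same geometric input to recognize the conjugated twists.
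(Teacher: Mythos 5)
Your approach is essentially the paper's: the paper offers no argument for this proposition beyond stating that the presentation ``can be read off Proposition 3.3 of \cite{LP}'', and your plan is that same citation plus a sketch of how to match generators and verify the relations geometrically. Your reduction of the commutativity relation is sound --- the parenthesized word is indeed $w\,\Tbk\,w^{-1} = T_{w(\beta_k)}$ with $w = \Ta^{-1}\Tbkk^{-1}\Tbj^{-1}\Ta^{-1}$, so the relation amounts to $i(\beta_i, w(\beta_k)) = 0$. One small correction to your guess about the isotopy class: $w(\beta_k)$ cannot be isotopic to any of the separating curves $\gamma_{\bullet,\bullet}$, because $\beta_k$ is non-separating and any diffeomorphism (in particular any product of Dehn twists) carries non-separating curves to non-separating curves. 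The image is some non-separating curve that can be isotoped off $\beta_i$ precisely when $i \prec j \prec k$, so the disjointness strategy still goes through; only the identification of the curve needs to be redone.
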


An analogous presentation for $\mathrm{PM}(\bT_n)$ is described by the following Proposition.
\begin{proposition}
\label{punct}
Let $i, j \in \{1 \dots n \}$, and set 
$$
A_{i,j} = \Tbjj\Ta\Tbii^{-1}\Tbi\Ta^{-1}\Tbjj^{-1}\Ta\Tbi^{-1}\Tbii\Ta^{-1}\Tbii^{-1}\Tbi.
$$
The pure mapping class group $\mathrm{PM}(\bT_n)$ is generated by $T_{\alpha}$, and $T_{\beta_{i}}$, $i \in \{1 \dots n \}$, subject to the following relations:
\begin{itemize}
\item Braid relations and Commutativity relations (see Proposition \ref{bd}).
\item ($G$-relation) $(\Ta T_{\beta_{1}})^{6} = A_{1,n}A_{2,n} \dots A_{n-1,n}$.
\end{itemize}
\end{proposition}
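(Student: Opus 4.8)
The plan is to present $\mathrm{PM}(\bT_n)$ as a quotient of $\mathrm{PM}(\bT_{n-1,1})$ and to transport the presentation of Proposition \ref{bd} across that quotient. First I would specialize Lemma \ref{punct-bd} to $g = 1$, $b = 0$, $m = 1$ and $n-1$ punctures: capping off with a punctured disc the single boundary circle of $\bT_{n-1,1}$ surrounding $p_n$ recovers $\bT_n$, so the lemma supplies the short exact sequence
$$
1 \rightarrow \bZ \rightarrow \mathrm{PM}(\bT_{n-1,1}) \rightarrow \mathrm{PM}(\bT_n) \rightarrow 1,
$$
whose right-hand map I call $\pi$ and whose kernel is generated by the Dehn twist $T_\partial$ along that boundary circle. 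Since a boundary twist commutes with every mapping class supported in the interior, $T_\partial$ is central and the extension is central; in particular $\ker\pi = \langle T_\partial\rangle$ is normally generated by the single element $T_\partial$.

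Next I would observe that $\pi$ carries the Humphrey generators $\Ta, T_{\beta_i}$ of $\mathrm{PM}(\bT_{n-1,1})$ to the generators of the same name in $\mathrm{PM}(\bT_n)$, both groups being generated by these classes over the common index set $i \in \{1,\dots,n\}$. Consequently, since $\ker\pi$ is normally generated by $T_\partial$, a presentation of the quotient $\mathrm{PM}(\bT_n) = \mathrm{PM}(\bT_{n-1,1})/\langle T_\partial\rangle$ is obtained from that of Proposition \ref{bd} by adjoining the lone relation $T_\partial = 1$, the braid and commutativity relations being inherited verbatim. Thus $\mathrm{PM}(\bT_n)$ is generated by $\Ta$ and $T_{\beta_i}$ subject to the braid and commutativity relations together with $T_\partial = 1$, and it only remains to rewrite this last relation in terms of the Humphrey generators.

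This rewriting is the heart of the matter, and the place I expect the real work to lie. I would first identify each word $A_{i,n}$ appearing in the statement with the Dehn twist $T_{\gamma_{i,n}}$ along the separating curve bounding a neighborhood of the segment from $p_i$ to $p_n$ through exactly those $p_k$ with $i \preccurlyeq k \preccurlyeq n$, by tracking the action of $\Ta$ and the $T_{\beta_i}$ on $\gamma_{i,n}$ — the same bookkeeping already underlying the commutativity relations of Proposition \ref{bd}. I would then invoke the relation of Gervais \cite{G} on the genus-one subsurface, namely
$$
(\Ta T_{\beta_{1}})^{6} = T_\partial \cdot A_{1,n}A_{2,n}\cdots A_{n-1,n}
$$
in $\mathrm{PM}(\bT_{n-1,1})$: geometrically $(\Ta T_{\beta_{1}})^{6}$ is the twist along the boundary of a regular neighborhood of $\alpha \cup \beta_1$, and Gervais's chain/star relation compares it with $T_\partial$ and the nested separating twists $T_{\gamma_{i,n}}$. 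Substituting this identity turns $T_\partial = 1$ into exactly the $G$-relation $(\Ta T_{\beta_{1}})^{6} = A_{1,n}A_{2,n}\cdots A_{n-1,n}$, giving the desired presentation.

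The main obstacle is thus the correct geometric calibration of the last display: pinning down the exponent $6$, the range $i = 1,\dots,n-1$, and the orientations and ordering of the $A_{i,n}$, since a miscount of enclosed punctures or a wrong ordering would corrupt the relation. As a consistency check I would specialize to $n = 1$, where the product over the $A_{i,n}$ is empty and the identity reduces to the classical two-chain relation $(\Ta T_{\beta_{1}})^{6} = T_\partial$ on the one-holed torus $\bT_{0,1}$, while the $G$-relation becomes $(\Ta T_{\beta_{1}})^{6} = 1$ in $\mathrm{PM}(\bT_1) \cong \SL_2(\bZ)$.
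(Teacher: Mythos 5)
Your overall architecture coincides with the paper's: Lemma \ref{punct-bd} reduces the problem to adjoining the single relation $T_{\gamma_{n,n}}=1$ (your $T_\partial=1$) to the presentation of Proposition \ref{bd}, and the remaining task is to express this boundary twist as a word in the Humphrey generators. The gap is in how you propose to do that. You identify $A_{i,n}$ with the separating twist $T_{\gamma_{i,n}}$ and then invoke a single ``Gervais relation'' $(\Ta T_{\beta_{1}})^{6} = T_\partial\cdot A_{1,n}\cdots A_{n-1,n}$. That identification is wrong: the degenerate star (two-chain) relation gives $(\Ta T_{\beta_{1}})^{6}=T_{\gamma_{1,n}}$, a \emph{single} separating twist, so with $A_{i,n}=T_{\gamma_{i,n}}$ your display would force $T_{\gamma_{n,n}}T_{\gamma_{2,n}}\cdots T_{\gamma_{n-1,n}}=1$, a product of positive twists along essential curves equal to the identity, which is false. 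The word $A_{i,j}$ is not a Dehn twist at all; the correct statement, and the actual content of the argument, is $A_{i,j}=T_{\gamma_{i,j}}T_{\gamma_{i+1,j}}^{-1}$ (Lemma \ref{recursion}), so that the product telescopes to $A_{1,n}\cdots A_{n-1,n}=T_{\gamma_{1,n}}T_{\gamma_{n,n}}^{-1}$. Only with this identification does your displayed identity become correct, as the combination of the two-chain relation with the telescoping; the identity itself happens to be true, but not for the reason you give.

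Establishing $A_{i,j}=T_{\gamma_{i,j}}T_{\gamma_{i+1,j}}^{-1}$ is where the real work lies, and it is absent from your proposal. The paper proves it by recognizing $T_{\gamma_{i,j}}T_{\gamma_{i+1,j}}^{-1}$ as the image, under the point-pushing map of the Birman exact sequence obtained by filling in $p_i$, of an explicit loop; computing that push as $T_{\beta_i}T_{\beta_{i+1}}^{-1}T_{\beta_j}T_{\tilde{\beta}_{i,j}}^{-1}$; and then rewriting $T_{\tilde{\beta}_{i,j}}$ in Humphrey generators via the conjugation formula $T_{T_{\mu}(\mu')}=T_{\mu}T_{\mu'}T_{\mu}^{-1}$. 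Your plan to ``track the action of $\Ta$ and the $T_{\beta_i}$ on $\gamma_{i,n}$'' gestures toward this but, as calibrated, produces the wrong object; and your $n=1$ consistency check cannot detect the error, since the product over the $A_{i,n}$ is empty in that case.
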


Before giving a proof of Proposition \ref{punct}, it is useful to consider an alternative presentation of $\mathrm{PM}(\bT_2)$, which will play a role in the next Section, and is contained in Corollary \ref{two} below. Although Corollary \ref{two} follows quite easily from Proposition \ref{punct}, rather than discussing the details of this derivation, I refer the reader to \cite{PS} for a direct proof.

\begin{corollary}
\label{two}
The pure mapping class group $\mathrm{PM}(\bT_2)$ is generated by $\Ta$, $T_{\beta_1}$ and $T_{\beta_2}$, subject to the following relations:
\begin{itemize}
\item Braid relations
\item ($G_2$-relation) $(T_{\beta_1} \Ta T_{\beta_2})^4 = 1$.
\end{itemize}
\end{corollary}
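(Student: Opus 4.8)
The plan is to deduce Corollary \ref{two} from Proposition \ref{punct} by specializing to $n = 2$ and then performing a single Tietze transformation. When $n = 2$ the index set $\{1,2\}$ contains no three distinct elements, so the commutativity relations are vacuous, and the $G$-relation becomes $(\Ta T_{\beta_1})^6 = A_{1,2}$, where, reading the indices modulo $2$,
$$
A_{1,2} = T_{\beta_1}\Ta T_{\beta_2}^{-1}T_{\beta_1}\Ta^{-1}T_{\beta_1}^{-1}\Ta T_{\beta_1}^{-1}T_{\beta_2}\Ta^{-1}T_{\beta_2}^{-1}T_{\beta_1}.
$$
Thus Proposition \ref{punct} presents $\mathrm{PM}(\bT_2)$ on exactly the generators $\Ta, T_{\beta_1}, T_{\beta_2}$ and the braid relations of Corollary \ref{two}, and the only remaining point is to show that, modulo the braid relations, replacing the relator $(\Ta T_{\beta_1})^6 A_{1,2}^{-1}$ by $(T_{\beta_1}\Ta T_{\beta_2})^4$ leaves the quotient group unchanged.

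Let $B$ be the group presented by $\Ta, T_{\beta_1}, T_{\beta_2}$ subject to the braid relations alone. Since the two presentations agree on generators and on the braid relations, it suffices to prove that the relators $w_1 = (\Ta T_{\beta_1})^6 A_{1,2}^{-1}$ and $w_2 = (T_{\beta_1}\Ta T_{\beta_2})^4$ have the same normal closure in $B$; I would aim for the stronger statement that $w_1$ and $w_2$ are conjugate in $B$. A first, cheap consistency check is abelianization: the braid relations force the images of $\Ta, T_{\beta_1}, T_{\beta_2}$ in $H_1(B)$ to coincide, so $H_1(B) \cong \bZ$, and one checks that both $w_1$ and $w_2$ map to $12$ times the generator. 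Hence both added relators impose the relation $12 = 0$, the abelianization of $\mathrm{PM}(\bT_2)$ is $\bZ/12$ in either presentation, and in particular $w_1$ cannot be conjugate to $(T_{\beta_1}\Ta T_{\beta_2})^{-4}$, pinning down the correct sign.

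The substantive step, which is the direct computation deferred to \cite{PS}, is the explicit reduction of $A_{1,2}$ inside $B$. The available tools are the conjugation identities that follow from the braid relations, namely $\Ta T_{\beta_i}\Ta^{-1} = T_{\beta_i}^{-1}\Ta T_{\beta_i}$ and $\Ta^{-1}T_{\beta_i}\Ta = T_{\beta_i}\Ta T_{\beta_i}^{-1}$ for $i = 1,2$, together with $T_{\beta_1}T_{\beta_2} = T_{\beta_2}T_{\beta_1}$. Using the commutation of $T_{\beta_1}$ and $T_{\beta_2}$ to bring like letters together and then the conjugation identities to absorb the isolated powers of $\Ta$, one shortens the length-$12$ word $A_{1,2}$ and rewrites $(\Ta T_{\beta_1})^6 A_{1,2}^{-1}$ as a conjugate of $(T_{\beta_1}\Ta T_{\beta_2})^4$. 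I expect this bookkeeping to be the main obstacle: there is no structural shortcut, and the rewriting has to be carried out letter by letter, which is exactly why it is cleaner to quote \cite{PS}.

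Finally, I would record the geometric picture that makes the answer transparent and supplies an independent check. Placing the two punctures of $\bT_2$ symmetrically, the quarter-turn rotation $R$ of the square torus fixes both of them, so $R \in \mathrm{PM}(\bT_2)$, while $R^4$ is a full rotation and hence isotopic to the identity rel punctures; as a product of Dehn twists $R$ is represented by $T_{\beta_1}\Ta T_{\beta_2}$, so $R^4 = 1$ is precisely the $G_2$-relation. Filling in the punctures sends $R$ to the order-$4$ element of $\mathrm{M}(\bT) = \SL(2,\bZ)$, and a count through the Birman exact sequence relating $\mathrm{PM}(\bT_2)$ to $\SL(2,\bZ)$ and the point-pushing contribution of the two punctures confirms that the braid relations together with $R^4 = 1$ already form a complete set of relations.
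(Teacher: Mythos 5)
Your specialization of Proposition \ref{punct} to $n=2$ is correct: the commutativity relations are vacuous, the $G$-relation becomes $(\Ta T_{\beta_1})^6 = A_{1,2}$ with exactly the word you wrote, and the corollary reduces to showing that the relators $w_1=(\Ta T_{\beta_1})^6A_{1,2}^{-1}$ and $w_2=(T_{\beta_1}\Ta T_{\beta_2})^4$ have the same normal closure in the group $B$ presented by the braid relations. But that last step is the entire content of the corollary, and you do not carry it out: the abelianization computation only shows that $w_1$ and $w_2$ agree in $H_1(B)\cong\bZ$ (a consistency check, nothing more), the "letter-by-letter rewriting" is deferred to \cite{PS} (which in fact gives an independent direct proof of the presentation, not a reduction of $A_{1,2}$), and the closing appeal to "a count through the Birman exact sequence" is not a method for verifying that a set of relations is complete. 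To be fair, the paper itself offers no proof either --- it asserts that the corollary follows "quite easily" from Proposition \ref{punct} and cites \cite{PS} --- so your attempt is no less complete than the source; but as a standalone argument it has a genuine gap at the decisive step.

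Your remark that "there is no structural shortcut" is also not right, and the shortcut uses only material already in the paper. By Proposition \ref{bd} with $n=2$ the group $B$ \emph{is} $\mathrm{PM}(\bT_{1,1})$, and by Lemma \ref{punct-bd} the group $\mathrm{PM}(\bT_2)$ is its quotient by the central subgroup generated by the boundary twist $T_{\gamma_{2,2}}$; hence any single word representing $T_{\gamma_{2,2}}$ in the Humphrey generators completes the presentation. The derivation of the $G$-relation in the proof of Proposition \ref{punct} (degenerate star relation plus Lemma \ref{recursion}) gives $(\Ta T_{\beta_1})^6=A_{1,2}\,T_{\gamma_{2,2}}$ in $\mathrm{PM}(\bT_{1,1})$, whence $w_1=T_{\gamma_{2,2}}$ by centrality. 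On the other hand the chain relation (\cite{FM}, Proposition 4.12) applied to the $3$-chain $\beta_1,\alpha,\beta_2$ gives $(T_{\beta_1}\Ta T_{\beta_2})^4=T_{d_1}T_{d_2}$, where $d_1,d_2$ bound the two complementary disks of a regular neighborhood of the chain; one disk contains only the marked point $p_1$, so $T_{d_1}=1$, and the other contains the boundary component, so $T_{d_2}=T_{\gamma_{2,2}}$. Thus $w_1=w_2=T_{\gamma_{2,2}}$ in $B$, the Tietze move is immediate, and no word-by-word computation or appeal to \cite{PS} is needed. Your quarter-turn heuristic is consistent with this picture, but only as a heuristic.
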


The proof of Proposition \ref{punct} depends on Proposition \ref{bd}, and Lemma \ref{punct-bd}. It follows immediately from the definition of Dehn twist that, for all $i \in \{1 \dots n\}$, $T_{\gamma_{i,i}} = 1$ in $\mathrm{PM}(\bT_{n})$. In $\bT_{n-1, 1}$, however, $\gamma_{n,n}$ is isotopic to the boundary component, and therefore $T_{\gamma_{n,n}}$ defines a non trivial mapping class. Lemma \ref{punct-bd} assures us that the only extra relation needed to obtain $\mathrm{PM}(\bT_{n})$ from the presentation in Proposition \ref{bd} is precisely $T_{\gamma_{n,n}} = 1$. What is left to do is finding an expression for $T_{\gamma_{n,n}}$ as a product of Humphrey generators. To achieve this, I need to introduce two more ingredients. The first is the following lemma,

\begin{figure}
\includegraphics[height=1.8in]{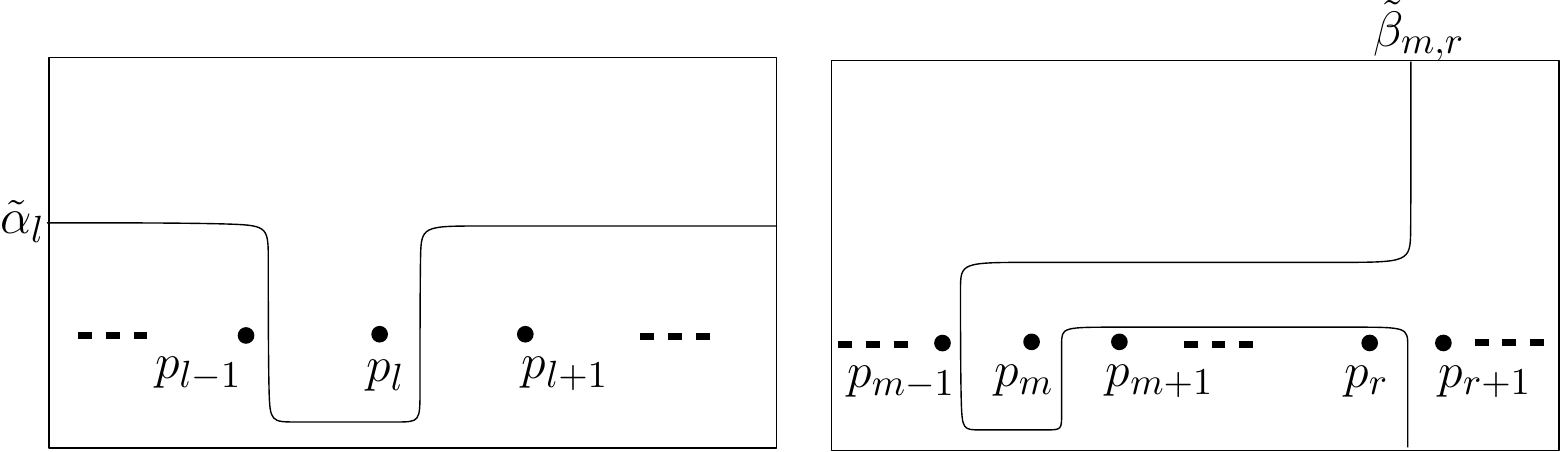}
\caption{Above is a picture of the simple closed curves $\tilde{\alpha}_m$, and $\tilde{\beta}_{n,r}$, which are discussed in the proof of Lemma \ref{recursion}.}
\label{fig 2}
\end{figure}

\begin{lemma}
\label{recursion}
Let $i, j \in \{1 \dots n \}$, and, as in Proposition \ref{punct}, set  
$$
A_{i,j} = \Tbjj\Ta\Tbii^{-1}\Tbi\Ta^{-1}\Tbjj^{-1}\Ta\Tbi^{-1}\Tbii\Ta^{-1}\Tbii^{-1}\Tbi,
$$
then $T_{\gamma_{i,j}} = A_{i,j}T_{\gamma_{i+1,j}}.$
\end{lemma} 
\begin{proof}
Let $\bT_{n-1}$ be the torus with $n-1$ punctures obtained from $\bT_n$ by filling in the puncture $p_i$. The Birman exact sequence (see \cite{FM}, Theorem 4.6), applied to the inclusion $\bT_n \hookrightarrow \bT_{n-1}$, yields
$$
1 \rightarrow \pi_1(\bT_{n-1}, p_i) \stackrel{\mathcal{P}ush}{\longrightarrow} \mathrm{PM}(\bT_{n}) \stackrel{\mathcal{F}orget}{\longrightarrow}  \mathrm{PM}(\bT_{n-1}) \rightarrow 1,
$$
where $\pi_1(\bT_{n-1}, p_i)$ is the fundamental group of $\bT_{n-1}$ with base-point $p_i$. The names attached to the maps above follow the conventions of Chapter 4 in \cite{FM}, to which I refer the reader for further details on the Birman exact sequence.

The key point is that $T_{\gamma_{i,j}}T_{\gamma_{i+1,j}}^{-1}$ lies in the image of the morphism $\mathcal{P}ush$. Figure \ref{fig 2} describes the geometry of two classes of simple closed curves in $\bT_n$, called respectively $\tilde{\alpha}_m$, and $\tilde{\beta}_{n,r}$, $m,n,r \in \{1 \dots n \}$. It immediately follows from the definition of $\mathcal{P}ush$ that, in $\mathrm{PM}(\bT_{n})$, 
$$
T_{\gamma_{i,j}}T_{\gamma_{i+1,j}}^{-1} = T_{\beta_i}T_{\beta_{i+1}}^{-1}T_{\beta_j}T_{\tilde{\beta}_{i,j}}^{-1}.
$$

It is not hard to express $T_{\tilde{\beta}_{i,j}}$ in terms of Humphrey generators. In fact, by simply applying the definition of Dehn twist, one can verify that $\tilde{\beta}_{i,j} = T_{\tilde{\alpha_i}}T_{\alpha}^{-1}(\beta_j)$, and $\tilde{\alpha_i} = T_{\beta_i}^{-1}T_{\beta_{i+1}}(\alpha)$.\footnote{Note that here, as everywhere in the paper, I am considering curves only up to isotopy.}
Now recall that, if $\mu$ and $\mu'$ are two simple closed curves in an oriented surface $\Sigma$, then $T_{T_{\mu}(\mu')} = T_{\mu}T_{\mu'}T_{\mu}^{-1}$ (this is Fact 3.7 in \cite{FM}). Thus 
$$
T_{\tilde{\beta}_{i,j}} = T_{\tilde{\alpha_i}}T_{\alpha}^{-1}T_{\beta_j}T_{\alpha}T_{\tilde{\alpha_i}}^{-1}, \text{ and } T_{\tilde{\alpha_i}} = T_{\beta_i}^{-1}T_{\beta_{i+1}} T_{\alpha} T_{\beta_{i+1}}^{-1}T_{\beta_i}.
$$
Using this last identity, we can rewrite first $T_{\tilde{\beta}_{i,j}}$, and then $T_{\gamma_{i,j}}T_{\gamma_{i+1,j}}^{-1}$, as a product of Humphrey generators, and this completes the proof of Lemma \ref{recursion}.
\end{proof}

The second ingredient is given by a family of relations in the mapping class group, introduced by Gervais in \cite{G} as \emph{star relations}.
\begin{proposition}
\label{stars}
Let $i, j, k \in \{1 \dots n \}$, and $i \preccurlyeq j \preccurlyeq k$. Then 
$$
(\Ta\Tbi\Tbj\Tbk)^{3} = T_{\gamma_{i,j}}T_{\gamma_{j,k}}T_{\gamma_{k,i}}.
$$
\end{proposition}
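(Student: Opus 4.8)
The plan is to deduce the star relation from its incarnation on an abstractly embedded genus-one subsurface with three boundary components, which is precisely the setting in which Gervais proves it in \cite{G}. Recall that an embedding of surfaces $\iota \colon \Sigma' \hookrightarrow \Sigma$ induces a homomorphism of (pure) mapping class groups carrying each Dehn twist $T_\mu$ to $T_{\iota(\mu)}$; consequently any relation among Dehn twists valid in $\mathrm{PM}(\Sigma')$ pushes forward to the corresponding relation in $\mathrm{PM}(\Sigma)$. I will therefore exhibit $\alpha, \beta_i, \beta_j, \beta_k$ as the curves of a standard star configuration inside such a subsurface of $\bT_n$, and identify the three resulting boundary twists with $T_{\gamma_{i,j}}$, $T_{\gamma_{j,k}}$, $T_{\gamma_{k,i}}$.

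First I would set up the configuration. Since $\beta_i, \beta_j, \beta_k$ are disjoint vertical curves and $\alpha$ is a horizontal curve meeting each of them transversally in a single point, the union $\Theta = \alpha \cup \beta_i \cup \beta_j \cup \beta_k$ is a connected graph with three intersection points, and a closed regular neighborhood $N$ of $\Theta$ in $\bT_n$ is a compact orientable subsurface. An Euler characteristic count gives $\chi(N) = \chi(\Theta) = -3$; since the $\alpha$-direction contributes a single handle and the three mutually homologous curves $\beta_i,\beta_j,\beta_k$ each create a boundary circle rather than a handle, one finds $N \cong \Sigma_{1,0,3}$. Inside $N$ the curves $\alpha, \beta_i, \beta_j, \beta_k$ form exactly Gervais's star, so his star relation \cite{G} reads
\begin{equation*}
(\Ta\Tbi\Tbj\Tbk)^{3} = T_{d_1}T_{d_2}T_{d_3},
\end{equation*}
where $d_1, d_2, d_3$ are the three boundary components of $N$. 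The cyclic order of the factors on the left is immaterial, as the twists $\Tbi,\Tbj,\Tbk$ commute and the right-hand side is a product of boundary twists, hence central in $\mathrm{PM}(N)$.

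The remaining, and genuinely geometric, step is to identify $d_1, d_2, d_3$ with the separating curves of the statement. The three boundary circles of $N$ are in bijection with the three complementary regions of $\Theta$ cut out by the cyclic positions of $\beta_i,\beta_j,\beta_k$: the arc of $\bT_n$ lying between $\beta_i$ and $\beta_j$, in the cyclic order $i \preccurlyeq j \preccurlyeq k$, is bounded by the component of $\partial N$ encircling exactly the punctures $p_l$ with $i \preccurlyeq l \preccurlyeq j$, and symmetrically for the other two arcs. By the very definition of $\gamma_{i,j}$ as the boundary of a tubular neighborhood of a segment from $p_i$ to $p_j$ passing through the intervening punctures, this component is isotopic in $\bT_n$ to $\gamma_{i,j}$; likewise $d_2 \simeq \gamma_{j,k}$ and $d_3 \simeq \gamma_{k,i}$. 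I expect this matching to be the main obstacle: it must be checked by an explicit isotopy in the fundamental domain, tracking how $\partial N$ runs along the two sides of each $\beta$ and along $\alpha$, and verifying in particular that the cyclic conventions yield precisely the index ranges appearing in $\gamma_{i,j},\gamma_{j,k},\gamma_{k,i}$ rather than their complements. Substituting $T_{d_1}=T_{\gamma_{i,j}}$, $T_{d_2}=T_{\gamma_{j,k}}$, $T_{d_3}=T_{\gamma_{k,i}}$ into the pushed-forward relation then gives the desired identity in $\mathrm{PM}(\bT_n)$.
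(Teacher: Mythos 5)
Your argument is correct and is essentially the paper's proof: the paper disposes of this Proposition by simply citing Theorem 1 of \cite{G}, and what you have written out --- taking a regular neighborhood $N \cong \Sigma_{1,0,3}$ of $\alpha \cup \beta_i \cup \beta_j \cup \beta_k$, pushing Gervais's star relation forward along the inclusion-induced homomorphism, and identifying the three boundary twists with $T_{\gamma_{i,j}}$, $T_{\gamma_{j,k}}$, $T_{\gamma_{k,i}}$ --- is exactly the content left implicit in that citation. The only point deserving real care is the one you already flag yourself: verifying, against the paper's cyclic-indexing conventions for which punctures $\gamma_{i,j}$ encloses, that the boundary component of $N$ cutting off the punctures lying between $\beta_i$ and $\beta_j$ really is the curve the paper labels $\gamma_{i,j}$.
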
 
\begin{proof}
See Theorem 1 in \cite{G}.
\end{proof}

Note that, when $i = j = k$, one obtains the following `degenerate' star relations,
$$
(\Ta\Tbi\Tbi\Tbi)^3 = T_{\gamma_{i, i-1}}.
$$ 
Using the braid relations, the product on the LHS of the equality can be rewritten as $(\Ta\Tbi)^6$, and therefore Proposition \ref{stars} yields, for all $i \in \{1, \dots, n\}$, the identity
$$
(\Ta\Tbi)^6 = T_{\gamma_{i, i-1}}.
$$

Let us fix $i \in \{1 \dots n\}$, say $i = 1$. Then the degenerate star identity for $i=1$ combined with an iterated application of Lemma \ref{recursion} (from which we import the notation), gives the formula
$$
(\Ta T_{\beta_{1}})^{6} = (A_{1,n}A_{2,n} \dots A_{n-1,n})T_{\gamma_{n,n}}.
$$   
Since the $A_{i,j}$-s are defined as a product of $\Ta$ and $\Tbi$-s, this yields the sought after expression of $T_{\gamma_{n,n}}$ in terms of Humphrey generators, and concludes the proof of Proposition \ref{punct}. \\

Lemma \ref{G} below is the last result of this Section, and describes a family of identities in $\mathrm{PM}(\bT_{n-1, 1})$, which will be useful in Section \ref{action}. 
\begin{lemma}
\label{G}
If $i \in \{ 1, \dots, n \}$, then 
$$
(\Ta T_{\beta_{1}})^{6}(A_{1,n}A_{2,n} \dots A_{n-1,n})^{-1} = (\Ta T_{\beta_{i}})^6(A_{i, i+n-1}A_{i+1, i+n-1} \dots A_{i + n-2,i+n-1})^{-1}
$$ 
as elements of $\mathrm{PM}(\bT_{n-1, 1})$.
\end{lemma}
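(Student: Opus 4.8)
The plan is to run, for an arbitrary base marked point $p_i$, the very computation that produced the $G$-relation in the proof of Proposition \ref{punct}, and then to compare the outputs for different $i$ inside the central kernel of the surjection $\mathrm{PM}(\bT_{n-1,1})\to\mathrm{PM}(\bT_n)$. First I would apply the degenerate star relation (Proposition \ref{stars}) based at $p_i$, namely $(\Ta\Tbi)^6 = T_{\gamma_{i,i-1}}$, and then peel off the marked points one at a time by iterating Lemma \ref{recursion} with second index $j=i-1$. Writing $B_i = A_{i,i+n-1}A_{i+1,i+n-1}\cdots A_{i+n-2,i+n-1}$ for the word appearing in the statement, this rewrites $T_{\gamma_{i,i-1}}$ as $B_i\,T_{\gamma_{i-1,i-1}}$, so that
$$
(\Ta\Tbi)^6 B_i^{-1} = B_i\, T_{\gamma_{i-1,i-1}}\, B_i^{-1},
$$
a conjugate of the twist $T_{\gamma_{i-1,i-1}}$ about a small loop encircling the single marked point $p_{i-1}$.

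Next I would reduce the statement to a computation in a central $\bZ$. Filling in the boundary of $\bT_{n-1,1}$ gives the map $\mathrm{PM}(\bT_{n-1,1})\to\mathrm{PM}(\bT_n)$ of Lemma \ref{punct-bd}, whose kernel is the infinite cyclic group generated by the boundary twist $T_{\gamma_{n,n}}$, and this element is central. Under this map every once-punctured-disk twist $T_{\gamma_{k,k}}$ with $k\neq n$ dies, so each side of the asserted identity becomes the corresponding $G$-relation and hence maps to $1$. Therefore both sides lie in $\langle T_{\gamma_{n,n}}\rangle\cong\bZ$, and the Lemma reduces to showing that the integer $e_i$ defined by $(\Ta\Tbi)^6 B_i^{-1}=T_{\gamma_{n,n}}^{\,e_i}$ is independent of $i$ (and in fact equals $1$, the value obtained for $i=1$ in the proof of Proposition \ref{punct}).

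Finally I would pin down $e_i$ by keeping track of the boundary during the peeling. For every $i$ the iteration above strips off all $n$ marked points exactly once. The $n-1$ honest punctures contribute nothing beyond the words $A_{\bullet,\bullet}$, which is precisely the content of Lemma \ref{recursion}; the single step that removes the boundary marked point $p_n$ is the delicate one, since Lemma \ref{recursion} as stated pushes an \emph{honest} puncture via the Birman sequence of $\bT_n$, and that step instead contributes exactly one factor of $T_{\gamma_{n,n}}$. For $i=1$ this factor surfaces as the terminal twist $T_{\gamma_{n,n}}$ (the second index $j=n$ is the boundary throughout), whereas for $i\neq 1$ it surfaces at the intermediate step whose first index equals $n$, while the terminal twist $T_{\gamma_{i-1,i-1}}$ is trivial; in either case one obtains $e_i=1$.

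The main obstacle is exactly this last step: one must re-examine the push map at the boundary component of $\bT_{n-1,1}$ and verify that removing $p_n$ contributes precisely one, correctly signed, power of $T_{\gamma_{n,n}}$. I would make this rigorous either by rerunning the Birman/push argument of Lemma \ref{recursion} directly on the bounded surface, or, more robustly, by evaluating a homomorphism that is injective on the central kernel $\langle T_{\gamma_{n,n}}\rangle$ — for instance the action on $H_1$ refined by a boundary-framing (winding) invariant — and checking that its value on $(\Ta\Tbi)^6 B_i^{-1}$ does not depend on $i$.
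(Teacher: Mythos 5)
Your reduction is sound up to the decisive point: since the $G$-relation based at any index holds in $\mathrm{PM}(\bT_n)$ by cyclic symmetry, both sides of the asserted identity lie in the central kernel $\langle T_{\gamma_{n,n}}\rangle\cong\bZ$ of $\mathrm{PM}(\bT_{n-1,1})\to\mathrm{PM}(\bT_n)$ given by Lemma \ref{punct-bd}, so the Lemma is equivalent to the equality of the integers $e_i$ defined by $(\Ta\Tbi)^6B_i^{-1}=T_{\gamma_{n,n}}^{\,e_i}$. This matches the reduction in the paper. The gap is that you never actually compute $e_i$ for $i\neq 1$. Your primary route --- re-running the peeling of Lemma \ref{recursion} based at $p_i$ inside $\mathrm{PM}(\bT_{n-1,1})$ --- founders exactly where you say it does: for $i\neq 1$ the iteration must pass through a step of the form $T_{\gamma_{n,i-1}}=A_{n,i-1}T_{\gamma_{1,i-1}}$ in which the point being pushed is the one that has been traded for a boundary component, and there the Birman point-pushing map is only defined modulo the boundary twist (the relevant kernel is $\pi_1$ of the unit tangent bundle rather than of the surface). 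The assertion that this step ``contributes precisely one, correctly signed, power of $T_{\gamma_{n,n}}$'' is therefore not something you can import from Lemma \ref{recursion}; it is essentially a restatement of Lemma \ref{G} itself, and the sentence ``in either case one obtains $e_i=1$'' describes the desired answer rather than deriving it.

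Your fallback --- evaluate a homomorphism injective on $\langle T_{\gamma_{n,n}}\rangle$ --- is the right idea and is in fact how the paper concludes, but the homomorphism you propose ($H_1$ refined by an unconstructed winding invariant) is not workable as stated: $\gamma_{n,n}$ is separating, so $T_{\gamma_{n,n}}$ acts trivially on $H_1$, and the framing refinement is left undefined. The paper instead uses $\mathcal{F}orget:\mathrm{PM}(\bT_{n-1,1})\to\mathrm{PM}(\bT_{0,1})$, filling in all $n-1$ interior punctures while keeping the boundary. Under this map every $\Tbi$ is sent to one and the same twist $T_{\beta}$, so every $A_{i,j}$ telescopes to $1$ and both sides of the identity map to $(\Ta T_{\beta})^6$; since mapping class groups of surfaces with nonempty boundary are torsion-free and $T_{\gamma_{n,n}}$ maps to the boundary twist of $\bT_{0,1}$, the restriction of $\mathcal{F}orget$ to $\langle T_{\gamma_{n,n}}\rangle$ is injective and the two sides coincide. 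Supplying this homomorphism (or an equivalent one) is the missing step that turns your outline into a proof.
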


Before proceeding with the proof of Lemma \ref{G}, a few comments are in order. Note that the $G$-relation of Proposition \ref{punct} depends on the degenerate star identity for $i=1$. However, because of the evident cyclic symmetry of the problem, in $\mathrm{PM}(\bT_n)$ one would have more generally, for any $i \in \{1 \dots n\}$, the identity 
$$
(\Ta T_{\beta_{i}})^{6} = A_{i, i+n-1}A_{i+1, i+n-1} \dots A_{i + n-2,i+n-1}.
$$
As a consequence, the following chain of equalities holds in $\mathrm{PM}(\bT_n)$, 
$$
(\Ta T_{\beta_{1}})^{6}(A_{1,n}A_{2,n} \dots A_{n-1,n})^{-1} = (\Ta T_{\beta_{i}})^6(A_{i, i+n-1}A_{i+1, i+n-1} \dots A_{i + n-2,i+n-1})^{-1} = 1.
$$
Lemma \ref{G} asserts that, in fact, the first of these two equalities can be lifted to $\mathrm{PM}(\bT_{n-1, 1})$.

\begin{proof}[Proof of Lemma \ref{G}]
Consider the element $G' \in \mathrm{PM}(\bT_{n-1, 1})$ obtained by multiplying the expression on the LHS of the equality, by the inverse of the expression on the RHS, that is
$$
G' = (\Ta T_{\beta_{i}})^6(A_{i, i+n-1}A_{i+1, i+n-1} \dots A_{i + n-2,i+n-1})^{-1}((\Ta T_{\beta_{1}})^{6}(A_{1,n}A_{2,n} \dots A_{n-1,n})^{-1})^{-1}.
$$ 
Also, set $G = (\Ta T_{\beta_{i}})^{6}(A_{i, i+n-1}A_{i+1, i+n-1} \dots A_{i + n-2,i+n-1})^{-1}$. 
As I pointed out above, the image of $G'$ in $\mathrm{PM}(\bT_n) = \mathrm{PM}(\bT_{n-1, 1})/\langle G \rangle$ is equal to $1$. Since $G$ is central it follows that $G'$ must be a power of $G$, that is, in $\mathrm{PM}(\bT_{n-1, 1})$ $G' = G^n$ for some $n \in \bZ$. I will show that $n = 0$. This implies that $G' = 1$ in $\mathrm{PM}(\bT_{n-1, 1})$, and proves Lemma \ref{G}.

The identity $G' = G^n$ is equivalent to the following,
\begin{equation}
(\Ta T_{\beta_{i}})^6(A_{i, i+n-1}A_{i+1, i+n-1} \dots A_{i + n-2,i+n-1})^{-1} = ((\Ta T_{\beta_{1}})^{6}(A_{1,n}A_{2,n} \dots A_{n-1,n})^{-1})^{n+1}.
\label{eq G' G}
\end{equation}
Recall that there is a homomorphism $\mathrm{PM}(\bT_{n-1,1}) \stackrel{\mathcal{F}orget}{\longrightarrow} \mathrm{PM}(\bT_{0,1})$,\footnote{By $\bT_{0,1}$ I mean a symplectic torus with no punctures, and one boundary component.}  which generalizes the map of the same name appearing in Birman exact sequence (see \cite{FM}, Section 9.1 for more details). Since the map $\mathcal{F}orget$ is induced by the inclusion $\bT_{n-1,1} \hookrightarrow \bT_{0,1}$, and all the $\beta_i$-s have identical isotopy class as subsets of $\bT_{0,1}$, we have that for all $i,j \in \{1, \dots, n\}$,
$$
\mathcal{F}orget(\Tbi) = \mathcal{F}orget(\Tbj) =: T_{\beta}, \text{ and } \mathcal{F}orget(A_{i,j}) = 1. 
$$
Applying $\mathcal{F}orget$ to both sides of equation (\ref{eq G' G}), yields therefore the identity
$$
(\Ta T_{\beta})^6 = (\Ta T_{\beta})^{6(n+1)}
$$
in $\mathrm{PM}(\bT_{0,1})$. As explained by Corollary 7.3 in \cite{FM}, there are no torsion elements in the mapping class group of a surface $\Sigma$, provided that its boundary set is non-empty. This is indeed the case of $\bT_{0,1}$, and thus $(n+1)$ must equal $1$, as desired.
\end{proof}

\section{The action of $\widetilde{\mathrm{PM}}(\bT_n)$ on $D^bCoh(X_n)$.} 
\label{action}
Let $X_n$ be a cycle of $n$ projective lines over a field $\gk$. That is, $X_n$ is a connected reduced curve with $n$ nodal singularities, such that its normalization $\tilde{X_n} \stackrel{\pi}{\rightarrow} X$ is a disjoint union of $n$ projective lines $D_1, \dots, D_n$, with the property that the pre-image along $\pi$ of the singular set interesects each $D_i$ in exactly two points. Following the discussion in Section 1 of \cite{ST}, the group acting on $D^bCoh(X_n)$ is going to be a suitable central extension of $\mathrm{PM}(\bT_n)$, whose elements should be viewed as \emph{graded} symplectic automorphisms of the mirror of $X_n$, i.e. the torus with $n$ punctures.

\begin{definition}
\label{ext}
Define $\widetilde{\mathrm{PM}}(\bT_n)$ as the $\bZ$-central extension of $\mathrm{PM}(\bT_n)$,
$$
0 \rightarrow \bZ \rightarrow \widetilde{\mathrm{PM}}(\bT_n) \rightarrow \mathrm{PM}(\bT_n) \rightarrow 1,
$$
generated by $\Ta$, $\Tbi$ $i \in \{1 \dots n \}$, and a central element $t$, subject to the following relations:
\begin{itemize}
\item Braid relations and Commutativity relations, as in Proposition \ref{punct},
\item ($\tilde{G}$-relation) $(\Ta T_{\beta_{1}})^{6}(A_{1,n}A_{2,n} \dots A_{n-1,n})^{-1} = t^2$.
\end{itemize}
\end{definition} 

\begin{remark}
\label{twotwo}
By lifting the $G_2$-relation of Corollary \ref{two} to the central extension, one can give an alternative presentation of $\widetilde{\mathrm{PM}}(\bT_2)$ in which the $\tilde{G}$-relation of Definition \ref{ext} is replaced by the following, 
\begin{itemize}
\item ($\tilde{G}_2$-relation) $(T_{\beta_1} \Ta T_{\beta_2})^4 = t^2$.
\end{itemize}
\end{remark}
The theory of spherical objects was introduced by Seidel and Thomas in \cite{ST}. Given a triangulated category $C$, under mild assumptions, to any object $\mathcal{E}$ in $C$ such that $Hom^{*}(\mathcal{E},\mathcal{E})$ is isomorphic to the cohomology of the $n$-sphere (i.e. a \emph{spherical object}), one can associate an autoequivalence, called \emph{twist}, $T_{\mathcal{E}}: C \rightarrow C$. 

Let $x_1 \dots x_n \in X_n$ be (closed) smooth points, such that $x_i$ lies on the $i$-th irreducible component of $X_n$. It is easy to see that the sheaves $\cO = \cO_{X_n}$, $\gk(x_i)$ $i \in \{1 \dots n \}$ in $D^b(Coh(X_n))$ are 1-spherical, and therefore determine twist functors $T_{\cO},  T_{\gk(x_i)}$. These equivalences, together with the \emph{shift} functor, will define the action of $\widetilde{\mathrm{PM}}(\bT_n)$ on $D^b(Coh(X_n))$. 
The main reference for the computations below are \cite{ST} and \cite{BK}. In \cite{BK} the reader can find a detailed treatment of the case $n =1$, while in \cite{ST} Seidel and Thomas discuss the smooth case, i.e. the action of the mapping class group of a torus with no marked points on the derived category of a smooth elliptic curve.  

The following lemma will be extremely useful for computations. 
\begin{lemma}
\label{equiv}
Let $F: D^b(Coh(X_n)) \rightarrow D^b(Coh(X_n))$ be an auto-equivalence of triangulated categories. If 
\begin{itemize}
\item $F(\cO) \cong \cO$, and 
\item for all $i \in \{1 \dots n \}$, $F(\gk(x_i)) \cong \gk(x_i)$, 
\end{itemize}
then there exists an isomorphisms $f: X_n \rightarrow X_n$, such that $F$ is naturally equivalent to $f^*: D^b(Coh(X_n)) \rightarrow D^b(Coh(X_n))$.
\end{lemma}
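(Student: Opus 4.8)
The plan is to reconstruct a genuine automorphism $f$ of $X_n$ from the action of $F$ on skyscrapers, and then to check that $F$ and $f^*$ are naturally isomorphic. Throughout I use that $X_n$ is projective and Gorenstein with $\omega_{X_n}\cong\cO_{X_n}$, so that $D^b(Coh(X_n))$ carries a Serre functor $S\cong(-)[1]$. In particular the $1$-spherical objects (those $P$ with $\Hom^\bullet(P,P)\cong\gk\oplus\gk[-1]$) form a class preserved by every triangulated autoequivalence, and by Serre duality this class is large: it contains the skyscrapers $\gk(x)$ at smooth points, every line bundle, and every simple (stable) bundle. As is standard in the Calabi--Yau setting, skyscrapers therefore cannot be singled out by purely homological conditions, and this is exactly why the hypotheses on $\cO$ and on the $\gk(x_i)$ are needed in order to break the symmetry.

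The main step is to show that $F$ carries smooth-point skyscrapers to (shifts of) smooth-point skyscrapers. The key observation is that $F$ preserves orthogonality to each fixed skyscraper: since $F(\gk(x_i))\cong\gk(x_i)$ and $F$ is an equivalence, $\Hom^\bullet(F(P),\gk(x_i))\cong\Hom^\bullet(P,\gk(x_i))$ for every object $P$ and every $i$. Now a $1$-spherical object of positive rank (a line bundle or a simple bundle) $E$ has $\Hom^0(E,\gk(x_i))\neq 0$ for \emph{every} $i$, because its fibre at $x_i$ is nonzero; whereas a skyscraper $\gk(x)$ at a smooth point $x\neq x_i$ has support disjoint from $x_i$, so $\Hom^\bullet(\gk(x),\gk(x_i))=0$. (Skyscrapers at the nodes are excluded at the outset, as the non-regularity of the local ring makes their self-$\Ext$ infinite-dimensional, so they are not $1$-spherical.) Hence, among $1$-spherical objects, the smooth-point skyscrapers are precisely those orthogonal to all but at most one of the $\gk(x_i)$; for $n=1$ one treats $\gk(x_1)$ itself by hypothesis. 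Since $F$ preserves both $1$-sphericality and this orthogonality condition, it preserves this subclass.

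This yields a bijection $f\colon X_n^{\mathrm{sm}}\to X_n^{\mathrm{sm}}$ with $F(\gk(x))\cong\gk(f(x))[m(x)]$. To remove the shift, note that the smooth locus of each component $D_i$ is connected and that $m(\cdot)$ is locally constant there, the $\gk(x)$ forming a connected flat family that $F$ carries to a connected family of skyscrapers up to one global shift; because $F(\gk(x_i))\cong\gk(x_i)$ pins $m(x_i)=0$ on each component, we get $m\equiv 0$, so $F(\gk(x))\cong\gk(f(x))$ and $f(x_i)=x_i$. This is precisely the role of assuming one fixed skyscraper per component. Continuity of $f$, its compatibility with specialization, and its behaviour near the nodes then identify $f$ with the restriction of an automorphism of the scheme $X_n$: it preserves each component (fixing a point of each and respecting the connected-component structure of the smooth locus) and extends across the nodes, the scheme structure being recovered from $\cO$ together with the category of skyscrapers in the spirit of Bondal--Orlov.

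Finally, set $G:=(f^*)^{-1}\circ F$. Then $G$ fixes $\cO$ and fixes every smooth-point skyscraper on the nose, and one concludes $G\cong\mathrm{id}$ by the usual argument: realising $G$ as a kernel transform, the condition $G(\gk(x))\cong\gk(x)$ for all $x$ forces the kernel to be supported on the diagonal and to restrict to a line bundle $L$ there, whence $G\cong(-)\otimes L$; the normalisation $G(\cO)\cong\cO$ then gives $L\cong\cO$, so $G\cong\mathrm{id}$ and $F\cong f^*$. \textbf{The main obstacle} lies not in the orthogonality argument but in these last two stages: upgrading the pointwise bijection $f$ to a morphism of the singular curve $X_n$, and producing an honest natural isomorphism $G\cong\mathrm{id}$ of functors. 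Both are cleanest through Fourier--Mukai kernels, but the representability of autoequivalences as kernel transforms is delicate for the nodal $X_n$; here one invokes the machinery developed for these genus-one curves in \cite{BK} (and \cite{ST} in the smooth case), taking care at the nodes, where the local notions of skyscraper and line bundle interact.
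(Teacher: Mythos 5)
Your strategy (classify the spherical objects that can be skyscrapers, extract a pointwise map $f$, then kill $(f^*)^{-1}\circ F$ by a kernel argument) is genuinely different from the paper's, but it has a gap that you yourself flag and then do not close: the final step requires representing the autoequivalence $G=(f^*)^{-1}\circ F$ by a Fourier--Mukai kernel on $X_n\times X_n$ and running the ``kernel supported on the diagonal'' argument. For the singular curve $X_n$ no such representability theorem is available for arbitrary autoequivalences of $D^b(Coh(X_n))$ (Orlov's theorem is a smooth projective statement), and \cite{BK} does not supply one --- it constructs specific integral transforms on the nodal cubic, which is not the same as showing every autoequivalence is one. Since this is exactly the step that produces the natural isomorphism $F\cong f^*$, the proof as written does not go through. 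Two further steps are also unsupported: (i) the characterization of smooth-point skyscrapers as the $1$-spherical objects orthogonal to all but one $\gk(x_i)$ is asserted rather than proved --- one must rule out indecomposable complexes supported at the nodes, and for $n=1$ the orthogonality condition is vacuous, so every line bundle satisfies it and the characterization fails outright; (ii) ``continuity and compatibility with specialization'' is not an argument that the set-theoretic bijection on the smooth locus extends to a scheme automorphism of the nodal curve.

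For comparison, the paper avoids kernels entirely. It first shows that $F$ fixes every power of the ample line bundle $\cL=\cO(x_1+\dots+x_n)$, by realizing $\cL^{-1}$ as the (co-)cone of a surjection $\cO\to\bigoplus_i\gk(x_i)$ and $\cL$ as the cone of the Serre-dual morphism, and iterating. The graded automorphism induced on $\bigoplus_m H^0(\cL^{\otimes m})$ then yields the automorphism $f$, and the natural isomorphism $F\cong f^*$ on the subcategory $\{\cL^{\otimes m}\}_{m\in\bZ}$ is extended to all of $D^b(Coh(X_n))$ using the ample-sequence machinery of \cite{BO} (Proposition A.3 there), in the form adapted to singular varieties in \cite{Ba}. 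If you want to salvage your approach, the cleanest fix is to replace your kernel step by exactly this ample-sequence extension argument; your orthogonality analysis would then be doing work the paper's hypotheses already make unnecessary.
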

\begin{proof}
Note that $X_n$ is projective, as $X_1$ is isomorphic to a nodal cubic curve in $\bP^2$, $X_2$ can be embedded as the union of a line and a quadric in $\bP^2$, and, if $n \geq 3$, $X_n$ can be embedded as a union of $n$ linear subspaces in $\bP^{n-1}$. Consider the line bundle $\cL = \cO(x_1 + \dots + x_n)$ over $X_n$. $\cL$ is ample (and very ample for $n \geq 3$). Since $F$ preserves $\cO$ and $\gk(x_i)$, it is easy to see that $F(\cL^{\otimes m}) \cong \cL^{\otimes m}$  for all $m \in \bZ$. In fact, $\cL^{-1}$ is isomorphic to the kernel of any surjective morphism of sheaves $p: \cO \rightarrow \bigoplus_{i=1}^{i=n} \gk(x_i)$. $F(\cL^{-1})$ is therefore isomorphic to the (co-)cone of the map 
$$
F(\cO) (\cong \cO) \stackrel{F(p)}{\rightarrow} F(\bigoplus_{i=1}^{i=n} \gk(x_i)) (\cong \bigoplus_{i=1}^{i=n} \gk(x_i)),
$$
where $F(p)$ must be surjective. It follows that $F(\cL^{-1}) \cong \cL^{-1}$. Similarly $\cL$ is isomorphic to the cone of any morphism in $Hom^1(\bigoplus_{i=1}^{i=n} \gk(x_i) , \cO )$ corresponding, under Serre duality, to a surjective morphism $p$ as above, and thus $F(\cL) \cong \cL$. Analogous arguments can be made for all the tensor powers of $\cL$. 

From here, in order to prove the claim, is sufficient to mimic the proof of Theorem 3.1 of \cite{BO} (see also Proposition 6.18 in \cite{Ba}, in which the argument from \cite{BO} is applied, as here, in the context of singular algebraic varieties). A brief summary of the argument goes as follows. Note first that the functor $F$ induces a graded automorphism of the homogeneous coordinate algebra $\bigoplus_{m=0}^{m=\infty}H^0(\cL^{\otimes m})$, which, up to rescaling, must be given by the pull-back along an automorphism $f: X_n \rightarrow X_n$. Call $C$ the full linear sub-category of $D^b(Coh(X_n))$ having as objects $\{ \cL^{\otimes m} \}_{m \in \bZ}$. As explained in \cite{BO}, one can define a natural equivalence between the restrictions to $C$ of $F$ and $f^*$. Further, since $X_n$ is projective, and $\cL$ is ample, $\{\cL^{\otimes m} \}_{m \in \bZ}$ form an ample sequence in the sense of \cite{BO} (for a proof of this, see Proposition 3.18 of \cite{Huy}). The claim then follows from Proposition A.3 of \cite{BO}, which implies that the natural equivalence $F \cong f^*$ over $C$ can be extended to the full derived category $D^b(Coh(X_n))$.
\end{proof}

\begin{remark}
\label{map}
Note that if $f: X_n \rightarrow X_n$ is an automorphism such that $f(x_i) = x_i$, and $n \geq 3$, then $f$ is the identity. If $n \leq 2$, $f$ may be non-trivial and act as a (non-trivial) permutation on the pre-image of the singular locus in the normalization. However it is immediate to see that $f$ is an involution, i.e. $f^2 = Id$. 
\end{remark}

\begin{lemma}
\label{comp}
Let $x \in X_n$ be a smooth point, then
\begin{itemize}
\item $T_{\gk(x)} \cong -\otimes \cO(x)$,
\item $\To(\gk(x)) \cong \cO(-x)[1]$,
\item $\To(\cO(x)) \cong \gk(x)$,
\item $\To(\cO) \cong \cO$.
\end{itemize}
\end{lemma}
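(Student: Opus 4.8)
The plan is to derive all four isomorphisms from the defining triangle of a spherical twist: for a spherical object $\cE$ and any object $F$, the twist $T_\cE(F)$ is the cone of the evaluation map $\Hom^*(\cE, F)\otimes\cE \to F$. Both $\cO$ and $\gk(x)$ are $1$-spherical (for $\cO$ this uses $H^*(X_n,\cO)=\bC\oplus\bC[-1]$, since $X_n$ has arithmetic genus $1$; for $\gk(x)$ it uses that $x$ is a smooth point). The last three statements are then object-wise computations of the cone of an evaluation map for $\To$, whereas the first is a statement about functors and is the main content of the lemma.

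For the first isomorphism I would start object-wise. Since $x$ is smooth, $R\mathcal{H}om(\gk(x),\cO)\cong\gk(x)[-1]$, so $\Hom^*(\gk(x),\cO)$ is one-dimensional in degree $1$ and the evaluation triangle reads $\gk(x)[-1]\to\cO\to T_{\gk(x)}(\cO)$. Rotating, $T_{\gk(x)}(\cO)$ is the extension of $\gk(x)$ by $\cO$ classified by the canonical nonzero class in $\Ext^1(\gk(x),\cO)$, namely $0\to\cO\to\cO(x)\to\gk(x)\to0$, so $T_{\gk(x)}(\cO)\cong\cO(x)$. To promote this to a statement about functors, set $G=(-\otimes\cO(-x))\circ T_{\gk(x)}$. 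The computation just made gives $G(\cO)\cong\cO$; moreover, for every smooth point $y$ one has $\Hom^*(\gk(x),\gk(y))=0$ when $y\neq x$, while $T_{\gk(x)}(\gk(x))\cong\gk(x)$ because the evaluation $\gk(x)\oplus\gk(x)[-1]\to\gk(x)$ is a split epimorphism through its identity degree-$0$ component. Hence $G(\gk(y))\cong\gk(y)$ for all smooth $y$; in particular $G$ fixes $\cO$ and the chosen points $x_1,\dots,x_n$, so Lemma \ref{equiv} gives $G\cong f^*$ for an automorphism $f$. Since $G$ fixes the skyscraper at every smooth point, $f$ fixes a dense open, hence $f=\mathrm{id}$ and $G\cong\mathrm{id}$, i.e. $T_{\gk(x)}\cong-\otimes\cO(x)$. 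The delicate point — and the step I expect to require the most care — is this passage from an equality of objects to a natural isomorphism of functors, which is exactly what Lemma \ref{equiv} supplies; verifying its hypotheses (in particular $T_{\gk(x)}(\gk(x))\cong\gk(x)$ and the effect of the auxiliary twist by $\cO(-x)$) is the crux.

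The three computations for $\To$ are then direct, using $\Hom^*(\cO,F)=R\Gamma(X_n,F)$. First, for $F=\gk(x)$ we have $R\Gamma(\gk(x))=\bC$ in degree $0$, the evaluation $\cO\to\gk(x)$ is the restriction map, and its cone is read off from $0\to\cO(-x)\to\cO\to\gk(x)\to0$ to be $\cO(-x)[1]$. Second, for $F=\cO(x)$, the identity $\omega_{X_n}\cong\cO_{X_n}$ and Riemann--Roch give $h^0(\cO(x))=1$ and $h^1(\cO(x))=0$, so the evaluation $\cO\to\cO(x)$ is the section vanishing at $x$ and its cone is the cokernel $\gk(x)$. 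Third, for $F=\cO$ the evaluation $\cO\oplus\cO[-1]\to\cO$ has identity degree-$0$ component, hence is a split epimorphism with cone $\cO[-1][1]=\cO$, giving $\To(\cO)\cong\cO$. Each is a one-line cone computation once the relevant cohomology and evaluation maps are identified.
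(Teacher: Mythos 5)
Your proposal is correct, and all four cone computations check out: $R\mathcal{H}om(\gk(x),\cO)\cong\gk(x)[-1]$ for $x$ smooth, the identification of the resulting nonsplit extension with $\cO(x)$, the cohomology counts $h^0(\cO(x))=1$, $h^1(\cO(x))=0$ (via $\chi(\cO)=0$ and $\omega_{X_n}\cong\cO$), and the split-epimorphism argument giving $T_\cE(\cE)\cong\cE$ for a $1$-spherical $\cE$ are all as they should be. The paper itself does not reprove any of this: it cites \cite{ST}, Section 3.d for the first isomorphism and \cite{BK}, Lemma 2.13 for the remaining three. For the last three items your argument is in substance the same as the cited one --- direct computation of the cone of the evaluation map. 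Where you genuinely diverge is the first item: Seidel--Thomas prove $T_{\gk(x)}\cong-\otimes\cO(x)$ by working at the level of Fourier--Mukai kernels, exhibiting the twist kernel as a cone and identifying it directly with the kernel of $-\otimes\cO(x)$, which yields the natural isomorphism of functors in one stroke and works for any effective Cartier divisor on any variety. You instead compute the composite $G=(-\otimes\cO(-x))\circ T_{\gk(x)}$ on $\cO$ and on all smooth skyscrapers, invoke Lemma \ref{equiv} to get $G\cong f^*$, and then force $f=\mathrm{id}$ because $f$ fixes the dense smooth locus (correctly sidestepping the possible involutions of Remark \ref{map} for $n\le 2$). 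This is a legitimate and self-contained alternative that leans on the Bondal--Orlov reconstruction machinery already present in the paper rather than on kernel manipulations; the trade-off is that it is special to this situation (it needs an ample sequence fixed by $G$ and enough fixed skyscrapers to pin down $f$), whereas the Seidel--Thomas argument is both more elementary in its inputs and more general. There is no circularity, since the paper's proof of Lemma \ref{equiv} does not use Lemma \ref{comp}.
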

\begin{proof}
The first isomorphism is proved in \cite{ST}, Section 3.d. For the other isomorphisms, see Lemma 2.13 in \cite{BK}.
\end{proof}

I am now ready to state the main theorem of this paper.
\begin{theorem}
\label{main}
The assignment
\begin{itemize}
\item for all $i \in \{1 \dots n\}$, $\Tbi \mapsto \Txi$,
\item $\Ta \mapsto \To$, and
\item $t \mapsto [1]$,
\end{itemize}
defines a weak action of $\widetilde{\mathrm{PM}}(\bT_n)$ on $D^b(Coh(X_n)).$
\end{theorem}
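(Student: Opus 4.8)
The plan is to verify that the assignment respects each defining relation of $\widetilde{\mathrm{PM}}(\bT_n)$ from Definition \ref{ext}; a weak action is exactly a group homomorphism into the group of natural-isomorphism classes of auto-equivalences of $D^b(Coh(X_n))$, so this is all that is required. Since $[1]$ is central in that group, the central element $t$ automatically maps to a central element, and the content lies in the braid relations, the commutativity relations, and the $\tilde G$-relation. Throughout I use two facts from \cite{ST}: for spherical objects $\mathcal E_1,\mathcal E_2$ the twists $T_{\mathcal E_1},T_{\mathcal E_2}$ commute when $\Hom^\ast(\mathcal E_1,\mathcal E_2)=0$ and satisfy the braid relation when $\dim\Hom^\ast(\mathcal E_1,\mathcal E_2)=1$, together with the conjugation formula $F\,T_{\mathcal E}\,F^{-1}\cong T_{F(\mathcal E)}$ for an auto-equivalence $F$. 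The braid relations are then immediate: for $i\ne j$ the points are distinct so $\Hom^\ast(\gk(x_i),\gk(x_j))=0$ and $\Txi,\Txj$ commute, while $\Hom^\ast(\cO,\gk(x_i))\cong\gk$ is one-dimensional, giving the braid relation between $\To$ and $\Txi$.

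For the commutativity relations I would first rewrite the long parenthesised word of Proposition \ref{bd} as $W^{-1}\Tbk W$ with $W=\Ta\Tbj\Tbkk\Ta$. Under the assignment and the conjugation formula its image is therefore a single twist $T_{\mathcal E}$ along $\mathcal E=(\To\Txj\Txkk\To)^{-1}(\gk(x_k))$, and the commutativity relation collapses to the orthogonality $\Hom^\ast(\gk(x_i),\mathcal E)\cong\Hom^\ast\big((\To\Txj\Txkk\To)(\gk(x_i)),\gk(x_k)\big)=0$. Using Lemma \ref{comp} one evaluates $(\To\Txj\Txkk\To)(\gk(x_i))$: the two point-twists tensor by $\cO(x_j)$ and $\cO(x_{k+1})$, while the two copies of $\To$ convert the skyscraper first into a line bundle and then back into a shifted skyscraper $\gk(y)$ at a point $y$ of class $-x_i+x_j+x_{k+1}$. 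The cyclic hypothesis $i\prec j\prec k$ places $y$ off the component carrying $x_k$, so $y\ne x_k$ and the $\Hom$-group vanishes; the only real care here is the degree and support bookkeeping for line bundles on the nodal curve $X_n$.

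It remains to check the $\tilde G$-relation, i.e. that the image $\Phi$ of $G=(\Ta T_{\beta_1})^6(A_{1,n}\cdots A_{n-1,n})^{-1}$ is naturally isomorphic to $[2]$. Once the braid and commutativity relations are in hand, the assignment already defines a homomorphism out of $\mathrm{PM}(\bT_{n-1,1})$ (Proposition \ref{bd}), and since $G$ is central there $\Phi$ commutes with $\To$ and every $\Txi$; Lemma \ref{G} additionally frees me to compute $\Phi$ with whichever index is most convenient. I would then apply $\Phi$ to the generating objects $\cO$ and $\gk(x_i)$ using Lemma \ref{comp} --- on the numerical Grothendieck group $\To$ and $\Txu$ act by matrices whose product has order six, and the $A$-functors precisely cancel the contributions of the remaining punctures, so both sides act as $[2]$ --- and deduce that $\Phi[-2]$ fixes $\cO$ and each $\gk(x_i)$ up to isomorphism. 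Lemma \ref{equiv} then yields $\Phi[-2]\cong f^*$ for an automorphism $f$ with $f(x_i)=x_i$ for all $i$, and Remark \ref{map} forces $f=\mathrm{id}$ when $n\ge 3$; for $n\le 2$ the residual involution is ruled out by hand, or one appeals to the explicit computations of \cite{BK} (the case $n=1$) and \cite{ST}. Hence $\Phi\cong[2]$, as required.

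The $\tilde G$-relation is the step I expect to be the main obstacle: it is the categorical shadow of the phenomenon $(\To\Txu)^6\cong[2]$, so it carries the essential content of the theorem and requires both a global computation of $\Phi$ on all spherical generators and the rigidity input of Lemma \ref{equiv} to upgrade an object-wise identification to a natural isomorphism of functors, with the automorphism ambiguity of Remark \ref{map} disposed of separately in the low cases $n\le 2$. By contrast the commutativity relations, though also computational, become routine once the word is recognised as a single conjugated twist and the verification is reduced to the vanishing of one $\Hom$-group via a support count.
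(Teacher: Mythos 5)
Your overall architecture coincides with the paper's: verify the braid, commutativity and $\tilde G$-relations of Definition \ref{ext}; reduce the $\tilde G$-relation to an evaluation on $\cO$ and the $\gk(x_i)$ via Lemma \ref{equiv} and Remark \ref{map}; invoke Lemma \ref{G} to work with a single convenient index; and treat $n\le 2$ separately (the paper does this via the $\tilde G_2$-relation of Remark \ref{twotwo} and the involution $\sigma$ swapping $x_1,x_2$). The braid relations are handled as in the paper. For the $\tilde G$-relation your plan is the right one, but note that the remark about matrices on the numerical Grothendieck group is only a consistency check: the actual content is the object-level verifications $E_{i,j}(\cO)\cong\cO$ and $E_{i,n}(\gk(x_1))\cong\gk(x_1)$, which occupy most of the paper's proof and which you do not carry out.

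There is, however, a genuine error in your verification of the commutativity relations. You claim that $\To\Txj\Txkk\To(\gk(x_i))$ is a shifted skyscraper $\gk(y)$ at a point $y$ of class $-x_i+x_j+x_{k+1}$. After the first three functors one indeed obtains $\cO(-x_i+x_j+x_{k+1})[1]$, but this is a line bundle of total degree $1$ whose multidegree is not concentrated on a single component; it is not isomorphic to $\cO(y)$ for any smooth point $y$, and $\To$ applied to it is not a skyscraper. For instance, with $n=4$ and $i=1$, $j=2$, $k=3$, the bundle $\cO(-x_1+x_2+x_4)$ has one-dimensional $H^0$ generated by a section vanishing identically on the first component, so the twist along $\cO$ produces a complex with cohomology in two degrees; the support argument ``$y\ne x_k$'' therefore does not apply. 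The paper avoids this by first using a braid relation to replace $\To^{-1}\Txk\To$ with $\Txk\To\Txk^{-1}$, which inserts the extra factor $\Txk^{-1}$ and leads to the degree-zero bundle $\cO(-x_i+x_j-x_k+x_{k+1})$; this has vanishing cohomology by Theorem 2.2 of \cite{DGK} and hence is fixed by $\To$ by Proposition 2.12 of \cite{ST}. Your target orthogonality $\Hom^*(\gk(x_i),\cE)=0$ is in fact true, but to see it you must move $\To$ across the $\Hom$ (using $\To^{-1}(\gk(x_k))\cong\cO(x_k)$), which again reduces to the cohomology of a degree-zero line bundle; as written, your justification fails.
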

Following \cite{ST}, by weak action I mean that this assignment defines a homorphism between $\widetilde{\mathrm{PM}}(\bT_n)$, and the group of autoequivalences of $D^b(Coh(X_n))$ modulo natural isomorphism of functors. The action defined in Theorem \ref{main} depends on the choice of $x_1, \dots, x_n$. However, the action is unique up to conjugation. Note that there is a natural $(\bC^*)^n$-action on $X_n$, with the property that the $i$-th copy of $\bC^*$ acts by multiplication on the $i$-th component of $X_n$. Let $\lambda = (\lambda_1, \dots, \lambda_n) \in (\bC^*)^n$, and let $m_{\lambda}: X_n \rightarrow X_n$ be the associated automorphism. Then one can show that, for all $i \in \{1, \dots, n \}$, $(m_{\lambda}^*) \Txi (m_{\lambda}^*)^{-1} = T_{k(\lambda_i x_i)}$, and $(m_{\lambda}^*) \To (m_{\lambda}^*)^{-1} = \To$.

\begin{proof}[Proof of Theorem \ref{main}]
I will show that Theorem \ref{main} gives a well-defined homomorphism, by checking that the relations in Definiton \ref{ext} hold. 

\emph{Braid relations.} For all $i, j \in \{1, \dots, n\}$, $\cO$, $\gk(x_i)$ and $\gk(x_j)$ form an $A_2$-configuration, in the language of \cite{ST}. The fact that such a collection of spherical twists satisfies the braid relations is proved in Proposition 2.13 of \cite{ST}.

\emph{Commutativity relations.} By Lemma 2.11 of \cite{ST}, if $\cE_1, \cE_2$ are spherical objects, then $T_{\cE_2}T_{\cE_1}T_{\cE_2}^{-1} \cong T_{T_{\cE_2}(\cE_1)}$. It follows that, in order to prove the Commutativity relations, is sufficient to show that,
for every $i, j, k \in \{1 \dots n \}$, $i \prec j \prec k$, 
$$
\To^{-1}\Txkk^{-1}\Txj^{-1}(\To^{-1}\Txk\To)\Txj\Txkk\To(\gk(x_i)) \cong \gk(x_i) \Leftrightarrow
$$
$$
\To^{-1}\Txkk^{-1}\Txj^{-1}(\Txk\To\Txk^{-1})\Txj\Txkk\To(\gk(x_i)) \cong \gk(x_i) \Leftrightarrow \footnote{\label{foot:iso}The isomorphism $\To^{-1}\Txk\To \cong \Txk\To\Txk^{-1}$ follows immediately from the braid relations.}
$$
$$
\To\Txk^{-1}\Txj\Txkk\To(\gk(x_i)) \cong \Txk^{-1}\Txj\Txkk\To(\gk(x_i)) .
$$
By Lemma \ref{comp} $\Txk^{-1}\Txj\Txkk\To(\gk(x_i)) \cong \cO(-x_i + x_j - x_k + x_{k+1})[1]$. Thus, I need to show that
$\To(\cO(-x_i + x_j - x_k + x_{k+1})) \cong \cO(-x_i + x_j - x_k + x_{k+1})$. Proposition 2.12 of \cite{ST} states that if $\cE_1$, $\cE_2$ are spherical objects such that $Hom^i(\cE_1, \cE_2) = 0$ for all $i$, then $T_{\cE_1}(\cE_2) \cong \cE_2$. The Commutativity relations reduce therefore to the claim: for all $i, j, k \in \{1 \dots n \}$, $i \prec j \prec k$,
$$
H^0(\cO(-x_i + x_j - x_k + x_{k+1})) = H^1(\cO(-x_i + x_j - x_k + x_{k+1})) = 0.
$$
This follows from Theorem 2.2 of \cite{DGK}, which gives a general formula for computing the cohomology groups of indecomposable vector bundles over a cycle of projective lines.

\emph{$\tilde{G}$-relation.} Assume first that $n \geq 3$. I will handle separately the case $n = 2$, for which I will use the alternative $\tilde{G}_2$-relation of Remark \ref{twotwo} (for the case $n =1$, the reader should refer to \cite{BK}). Let $i, j \in \{1 \dots n \}$, and define
$$
E_{i,j} = \Txjj\To\Txii^{-1}\Txi\To^{-1}\Txjj^{-1}\To\Txi^{-1}\Txii\To^{-1}\Txii^{-1}\Txi.
$$
I need to prove that $(\To T_{k(x_{1})})^{6} \cong (E_{1,n}E_{2,n} \dots E_{n-1,n})[2]$. After Lemma \ref{equiv} and Remark \ref{map}, it is sufficient to check the $\tilde{G}$-relation on $\cO$, and $\gk(x_i)$ for all $i \in \{1 \dots n\}$. In fact, in view of Lemma \ref{G}, it is enough to evaluate the $\tilde{G}$-relation on $\cO$ and $\gk(x_1)$, as, for any $k \in \{1 \dots n \}$,
$$
(\To T_{\gk(x_{1})})^{6}(\gk(x_k)) \cong (E_{1,n}E_{2,n} \dots E_{n-1,n})[2](\gk(x_k)) \Leftrightarrow
$$
$$
(\To T_{\gk(x_{k})})^{6}(\gk(x_k)) \cong (E_{i, i+n-1}E_{i+1, i+n-1} \dots E_{i + n-2,i+n-1})[2](\gk(x_k)),
$$
and, by the cyclic symmetry of the problem, the latter identity is proved in exactly the same way as the claim that the $\tilde{G}$-relation holds for $\gk(x_1)$.

$\bullet$ $\tilde{G}$-relation on $\cO$. Simply by keeping track of the isomorphisms collected in Lemma \ref{comp}, one can see that
$$
(\To\Txu)^6(\cO) \cong (\To\Txu)^4(\cO(-x_1)[1]) \cong (\To\Txu)^2(\gk(x_1)[1]) \cong \cO[2].
$$
On the other hand, I will show that, for all $i,j \in \{1, \dots, n\}$, $E_{i,j}(\cO) \cong \cO$, and therefore 
$$
(E_{1,n}E_{2,n} \dots E_{n-1,n})[2](\cO) \cong \cO[2],
$$
as expected. Note first that if $x, y \in X_n$ are smooth points lying on different connected components, then $\To(\cO(x - y)) \cong \cO(x - y)$. This again follows from Theorem 2.2 of \cite{DGK}, but can also be checked directly using the braid relations. Using this isomorphism, and Lemma \ref{comp}, it is easy to check that
$$
E_{i,j}(\cO) = \Txjj\To\Txii^{-1}\Txi\To^{-1}\Txjj^{-1}\To\Txi^{-1}\Txii\To^{-1}\Txii^{-1}\Txi(\cO) \cong 
$$
$$
\Txjj\To\Txii^{-1}\Txi\To^{-1}\Txjj^{-1}\To\Txi^{-1}\Txii(\cO(x_i - x_{i+1})) \cong 
$$
$$
\Txjj\To\Txii^{-1}\Txi\To^{-1}\Txjj^{-1}(\cO) \cong 
$$
$$
\Txjj\To\Txii^{-1}(\gk(x_{j+1})[-1]) \cong \cO.
$$

$\bullet$ $\tilde{G}$-relation on $\gk(x_1)$. As before, it is enough to apply Lemma \ref{comp} to see that
$$
(\To\Txu)^6(\gk(x_1)) \cong (\To\Txu)^4(\cO[1]) \cong (\To\Txu)^2(\cO(-x_1)[2]) \cong \gk(x_1)[2].
$$
Further, for all $i \in \{1, \dots, n-1\}$, $E_{i,n}(\gk(x_1)) \cong \gk(x_1)$. In fact, 
$$
E_{i,n}(\gk(x_1)) = \Txu\To\Txii^{-1}\Txi\To^{-1}\Txu^{-1}\To\Txi^{-1}\Txii\To^{-1}\Txii^{-1}\Txi(\gk(x_1)) \cong
$$
$$
\Txu\To\Txii^{-1}\Txi\To^{-1}\Txu^{-1}\To\Txi^{-1}\Txii(\cO(x_1)).
$$
Now, 
$$
\Txu\To\Txii^{-1}\Txi\To^{-1}\Txu^{-1}\To\Txi^{-1}\Txii(\cO(x_1)) \cong \gk(x_1) \Leftrightarrow
$$
$$
\Txii^{-1}\Txi(\To^{-1}\Txu^{-1}\To)\Txi^{-1}\Txii(\cO(x_1)) \cong \cO(x_{1}) \Leftrightarrow \footnote{The isomorphism $\To^{-1}\Txu^{-1}\To \cong \Txu\To^{-1}\Txu^{-1}$ is obtained from the one in Footnote \ref{foot:iso}, by taking inverses on both sides of the ``$\cong$'' sign.}
$$
$$
\Txii^{-1}\Txi(\Txu\To^{-1}\Txu^{-1})\Txi^{-1}\Txii(\cO(x_1)) \cong \cO(x_{1}) \Leftrightarrow
$$
$$
\To^{-1}\Txu^{-1}\Txi^{-1}\Txii(\cO(x_1)) \cong \cO(x_{i+1} - x_i) \Leftrightarrow
$$
$$
\cO(x_{i+1} - x_i) \cong \To(\cO(x_{i+1} - x_i)). 
$$
As I pointed out above, this last isomorphism can be proved using the braid relations. Thus
$$
(E_{1,n}E_{2,n} \dots E_{n-1,n})[2](\gk(x_1)) \cong \gk(x_1)[2],
$$
and this concludes the proof of Theorem \ref{main} for the case $n \geq 3$.

\emph{The case n = 2}. Note that there are isomorphisms
\begin{itemize}
\item $(T_{\gk(x_1)}T_{\cO}T_{\gk(x_2)})^2(\cO) \cong \cO[1]$, and
\item $(T_{\gk(x_1)}T_{\cO}T_{\gk(x_2)})^2(\gk(x_1)) \cong \gk(x_{2})[1]$, $(T_{\gk(x_1)}T_{\cO}T_{\gk(x_2)})^2(\gk(x_2)) \cong \gk(x_{1})[1]$. 
\end{itemize}
Let us check this for $\gk(x_1)$:
$$
(T_{\gk(x_1)}T_{\cO}T_{\gk(x_2)})(T_{\gk(x_1)}T_{\cO}T_{\gk(x_2)})(\gk(x_1)) \cong (T_{\gk(x_1)}T_{\cO}T_{\gk(x_2)})(\cO[1]) \cong \gk(x_2)[1].
$$
Consider an involution $\sigma: X_2 \rightarrow X_2$ such that $\sigma(x_1) = x_2$, and $\sigma(x_2) = x_1$. It follows from Remark \ref{map} that there is an isomorphism $f: X_2 \rightarrow X_2$, and a natural equivalence $(T_{\gk(x_1)}T_{\cO}T_{\gk(x_2)})^2 \cong f^* \sigma^* [1]$. As $\sigma$ and $f$ commute, by taking the square of this natural equivalence, one gets
$$
(T_{\gk(x_1)}T_{\cO}T_{\gk(x_2)})^4 \cong (f^* \sigma^* [1]) (f^* \sigma^* [1]) \cong (f^* )^2 (\sigma^*)^2 [2] \cong [2].
$$
In view of Remark \ref{twotwo}, this implies that the action of $\widetilde{\mathrm{PM}}(\bT_2)$ on $D^b(Coh(X_2))$ is well defined, and proves the case $n = 2$ of Theorem \ref{main}.
\end{proof}

\begin{remark}
\label{symplectic}
It follows from results in Appendix D of \cite{B}, that the action defined by Theorem \ref{main} is, in an appropriate sense, a categorification of the symplectic representation of the mapping class group. Denote $K^{num}(X_n)$ the quotient of $K_0(\Perf(X_n))$ by the radical of the Euler form (see Appendix D of \cite{B} for further details). The Euler form induces a non-degenerate skew-symmetric form on $K^{num}(X_n)$, and there is an isomorphism of symplectic lattices $K^{num}(X_n) \cong H_1(\bT_n, \bZ) (\cong \bZ^{n+1})$ (here, $\bT_n$ denotes the torus with the $n$ marked points removed). Note that the induced action of $\tilde{\mathrm{PM}}(\bT_n)$ on $K^{num}(X_n)$ factors through $\mathrm{PM}(\bT_n)\oplus \bZ_2$. Bodnarchuk's computations imply that the resulting action of $\mathrm{PM}(\bT_n)$ on $K^{num}(X_n)$ is isomorphic to the standard symplectic representation of $\mathrm{PM}(\bT_n)$ over $H_1(\bT_n, \bZ)$.
\end{remark}

\end{document}